\DeclareMathAlphabet{\mathpzc}{OT1}{pzc}{m}{it}
\DeclareMathOperator{\End}{End}
\DeclareMathOperator{\Hom}{Hom}
\DeclareMathOperator{\Ind}{Ind}
\DeclareMathOperator{\GL}{GL}
\DeclareMathOperator{\Mod}{Mod}
\DeclareMathOperator{\val}{val}
\DeclareMathOperator{\Inj}{Inj}
\DeclareMathOperator{\Ord}{Ord}
\DeclareMathOperator{\Irr}{Irr}
\newcommand{\Indu}[3]{\Ind_{#1}^{#2}{#3}}
\newcommand{\oo}{\mathfrak o}
\newcommand{\oF}{\oo}
\newcommand{\Z}{{\mathbb Z}}
\newcommand{\mm}{\mathfrak{m}}
\newcommand{\Q}{{\mathbb Q}}
\newcommand{\Qp}{\mathbb {Q}_p}
\newcommand{\Fpbar}{\overline{\mathbb{F}}_p}
\newcommand{\iso}{\buildrel \sim \over \longrightarrow}
\newcommand{\unif}{\varpi}
\def\Mod{\mathrm{Mod}}
\def\smGmod{\Mod^{\mathrm{sm}}_G}
\def\admGmod{\Mod^{\mathrm{adm}}_G}
\def\locadmGmod{\Mod^{\mathrm{l.adm}}_G}
\def\locadmGmodz{\Mod^{\mathrm{l.adm}}_{G,\zeta}}
\def\admGmodz{\Mod^{\mathrm{adm}}_{G,\zeta}}
\def\smGmodz{\Mod^{\mathrm{sm}}_{G,\zeta}}
\def\smHmod{\Mod^{\mathrm{sm}}_H}
\def\locadmMmod{\Mod^{\mathrm{l.adm}}_M}
\def\smKmod{\Mod^{\mathrm{sm}}_K}
\def\smKmodz{\Mod^{\mathrm{sm}}_{K,\zeta}}
\newtheorem{lemma}[subsection]{Lemma}
\newtheorem{df}[subsection]{Definition}
\newtheorem{cor}[subsection]{Corollary}
\newtheorem{prop}[subsection]{Proposition}
\newtheorem{remark}[subsection]{Remark}
\newtheorem{thm}[subsection]{Theorem}
\newtheorem{remar}[subsection]{Remark}
\title[Ordinary parts of admissible representations II]
{Ordinary parts of admissible representations of 
$p$-adic reductive groups II.  Derived functors }
\title{On the effaceability of certain $\delta$-functors}
\author{Matthew Emerton \and Vytautas Pa\v{s}k\={u}nas}
\thanks{The first author was supported in part by the NSF grant 
DMS-0701315}
\address[Matthew Emerton]{Mathematics Department, Northwestern
University, 2033 Sheridan Rd., Evanston, IL 60208}
\email[Matthew Emerton]{emerton@math.northwestern.edu}
\address[Vytautas Pa\v{s}k\={u}nas]{Fakult\"at f\"{u}r Mathematik, Universit\"at Bielefeld,
Postfach 100131, D-33501 Bielefeld}
\email[Vytautas Pa\v{s}k\={u}nas]{paskunas@math.uni-bielefeld.de}
\begin{document}

\maketitle
\section{Introduction}
\parindent0mm
\parskip5pt
Let $F$ be a finite extension of $\Qp$ and let $\oF$ be its ring of integers. Let $G:=\GL_2(F)$,
let $K:=\GL_2(\oF)$, and let $Z$ be the centre of $G$.
Let $A$ be a finite local Artinian $\Z_p$-algebra with residue field $k$
(necessarily finite, of characteristic $p$).
Recall that 
a representation $V$ of $G$ on an $A$-module is said to be \textit{smooth}
if for all $v\in V$ the stabilizer of $v$ is an open subgroup of $G$. 
Let $\smGmod(A)$  denote the category of smooth $A$-representations. Further recall that a smooth $A$-representation $V$ is \textit{admissible} 
if for every open subgroup $J$ of $G$ the space $V^J$ of $J$-invariants is a finite $A$-module. Let $\admGmod(A)$ denote the full subcategory 
of $\smGmod(A)$ consisting of admissible representations.
The categories $\admGmod(A)$ and $\smGmod(A)$ are abelian. In practice, one
is interested in admissible representations, but $\admGmod(A)$ does not have enough injectives. The category $\smGmod(A)$
has enough injectives, but it is too big. To remedy this the first author,
in \cite{em1}, \cite{em2},
has introduced an intermediate category 
of locally admissible representations $\locadmGmod(A)$.
We recall the definition: If $V$ is a smooth $A$-representation of $G$,
a vector $v\in V$ is 
called \textit{locally admissible} if the $A[G]$-submodule of $V$ generated by $v$
is admissible; 
a smooth representation $V$ of $G$ over $A$ is then called \textit{locally admissible}
if every $v\in V$ is locally admissible. We let $\locadmGmod(A)$ denote the full 
subcategory of $\smGmod(A)$ consisting of locally admissible representations.
The category $\locadmGmod(A)$ is abelian and has enough injectives 
\cite[Prop.~2.2.15]{em1}, \cite[Prop.~2.1.1]{em2}.

We introduce some variants of the preceding categories:

If $\zeta: Z\rightarrow A^{\times}$ is a smooth character,
then we denote by 
$\admGmodz(A)$, 
$\locadmGmodz(A)$, and $\smGmodz(A)$
the full subcategories of
$\admGmod(A)$,
$\locadmGmod(A)$, and $\smGmod(A)$ respectively,
consisting of representations admitting $\zeta$ as a central character.
We also let $\smKmodz(A)$ denote the full subcategory 
of $\smKmod(A)$ consisting of $K$-representations admitting 
$\zeta_{| Z \cap K}$ as a central character.
The categories
$\admGmodz(A)$, $\locadmGmodz(A)$, $\smGmodz(A)$,
and $\smKmodz(A)$
are abelian, and the last three have enough injectives. 
(See Lemma~\ref{lem:z categories} below.)


In this note we show that the restriction to $K$ of an  injective object in
$\locadmGmodz(A)$ (resp.\ 
$\locadmGmod(A)$)
is an injective object in  $\smKmodz(A)$ (resp. $\smKmod(A)$).
This implies that certain $\delta$-functors defined in \cite{em2} are effaceable,
and remain effaceable when restricted to $\locadmGmodz(A)$. 
In particular, it proves Conjecture~3.7.2 of \cite{em2} for $\GL_2(F)$.

\textit{Acknowledgments.} The authors
would like to thank Florian Herzig for useful comments, which have improved the exposition. The second author's work on this note was undertaken while
he was visiting Universit\'e Paris-Sud, supported by Deutsche Forschungsgemeinschaft, and
he would like to thank these institutions. 

\section{Injectives}
We establish some simple results about injective objects in various contexts.
In this section we change our notational conventions from those of the introduction,
and let $G$ denote an arbitrary $p$-adic analytic group.

\begin{lemma}
\label{lem:essential}
If $G$ is compact,
if $V$ is an injective object of $\smGmod(k)$, and if $W$ is an
injective envelope of $V$ in $\smGmod(A)$, then the inclusion
$V \hookrightarrow W$ induces an isomorphism $V\iso W[\mathfrak m]$.
\end{lemma}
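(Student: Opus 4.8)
The plan is to use the standard characterization of injective objects via the vanishing of $\Ext^1$, together with the fact that $A$ is a finite local Artinian ring with maximal ideal $\mm$, so that every $A$-module is built up from $k$-modules by finitely many extensions. Since $G$ is compact, the category $\smGmod(A)$ has a particularly rigid structure: smoothness plus compactness means every smooth representation is a union of finite-length (in fact finite-dimensional over $A$) subrepresentations, and duality exchanges injectives with projectives. Write $W[\mm]$ for the $A$-submodule of $W$ killed by $\mm$; this is naturally a $k$-representation of $G$, and it is smooth, and we have the factorization $V \hookrightarrow W[\mm] \hookrightarrow W$ of the given inclusion, using that $V$ is a $k$-representation.

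\textbf{Step 1: $V \hookrightarrow W[\mm]$ is essential.} Since $W$ is an injective envelope of $V$ in $\smGmod(A)$, the inclusion $V \hookrightarrow W$ is essential, i.e.\ every nonzero $A[G]$-submodule of $W$ meets $V$ nontrivially. In particular every nonzero $k[G]$-submodule of $W[\mm]$ meets $V$, so $V \hookrightarrow W[\mm]$ is an essential extension in $\smGmod(k)$.

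\textbf{Step 2: $W[\mm]$ is injective in $\smGmod(k)$.} This is the heart of the matter. The point is that restriction of scalars along $A \to k$ has a right adjoint on module categories, namely $M \mapsto \Hom_A(k, M) = M[\mm]$, and this right adjoint is compatible with the $G$-action and with smoothness because $G$ acts $A$-linearly and $k$ is finitely generated over $A$ (so $\Hom_A(k, -)$ commutes with the filtered colimits that compute smooth vectors). Since restriction of scalars $\smGmod(k) \to \smGmod(A)$ is exact, its right adjoint $M \mapsto M[\mm]$ preserves injectives. Hence $W[\mm]$ is injective in $\smGmod(k)$.

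\textbf{Step 3: Conclude.} An essential extension of an injective object is trivial: since $V$ is already injective in $\smGmod(k)$ and $V \hookrightarrow W[\mm]$ is essential with $W[\mm]$ also injective, the inclusion $V \hookrightarrow W[\mm]$ splits; an essential split injection is an isomorphism. Therefore $V \iso W[\mm]$, which is exactly the assertion.

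\textbf{The main obstacle} I anticipate is Step 2 — specifically, verifying carefully that the functor $M \mapsto M[\mm]$ really is right adjoint to restriction of scalars \emph{as a functor between the smooth representation categories}, not just between plain module categories. One must check that $M[\mm]$ is again a smooth $k[G]$-representation when $M$ is a smooth $A[G]$-representation (immediate, as $M[\mm] \subseteq M$ and smoothness passes to subrepresentations), and that the adjunction isomorphism $\Hom_{k[G]}(N, M[\mm]) \cong \Hom_{A[G]}(N, M)$ holds for $N$ a smooth $k[G]$-representation (this is just the module-theoretic adjunction, since a $k[G]$-linear or $A[G]$-linear map is the same as a $G$-equivariant $k$- or $A$-linear map, and $\Hom_A(N, M) = \Hom_A(N, M[\mm])$ because $N$ is killed by $\mm$). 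Compactness of $G$ is not strictly needed for this argument but guarantees $\smGmod(k)$ has enough injectives so that the notion of injective envelope in Step 1's hypothesis makes sense; I would remark on where, if anywhere, it is genuinely used.
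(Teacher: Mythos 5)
Your proof is correct and follows essentially the same route as the paper: factor the inclusion through $W[\mathfrak m]$, split it using injectivity of $V$ in $\smGmod(k)$, and kill the complement by essentiality. Note that your Step~2 (injectivity of $W[\mathfrak m]$ via the adjunction with restriction of scalars), while correct, is never actually used in Step~3 --- the splitting needs only that $V$ is injective --- so the ``heart of the matter'' is dispensable.
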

\begin{proof}
Certainly the inclusion $V \hookrightarrow W$ factors through an inclusion
$V \hookrightarrow W[\mathfrak m]$.  Since the source is injective,
this inclusion splits.  If $C$ denotes a complement to the inclusion,
then $V \cap C = 0,$ and thus $C = 0$ (as $W$ is an essential extension
of $V$).  This proves the lemma.
\end{proof}

\begin{lemma}
\label{lem:adm}
Let $H$ be a finite index open subgroup of $G$.
\begin{enumerate}
\item[(i)]
An object of $\smGmod(A)$ is admissible {\em (}resp.\ locally admissible{\em )}
as a $G$-representation if and only if it is so as an $H$-representation.
\item[(ii)]
If $V$ is an object of $\smHmod(A)$, so that $\Ind_H^G V$ {\em (}$\iso 
A[G]\otimes_{A[H]} V${\em )} is an object of $\smGmod(A)$,
then $\Ind_H^G V$ is admissible {\em (}resp.\ locally admissible{\em )} as 
a $G$-representation
if and only if $V$ is admissible {\em (}resp.\ locally admissible{\em )}
as an $H$-representation.
\end{enumerate}
\end{lemma}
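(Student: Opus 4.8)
The plan is to prove part (i) first --- the admissibility statement being essentially formal, and the local admissibility statement reducing to the case of a normal subgroup --- and then to deduce part (ii). Throughout I would work with the normal core $N := \bigcap_{g \in G} g H g^{-1}$ of $H$ in $G$; since the conjugate $gHg^{-1}$ depends only on the coset $gH$, there are at most $[G:H]$ of them, so $N$ is an intersection of finitely many finite-index open subgroups, hence a finite-index open normal subgroup of $G$ contained in $H$. I would also use without comment the standard facts (cf.\ \cite{em1}) that admissible, resp.\ locally admissible, smooth $A$-representations of a $p$-adic analytic group form a full abelian subcategory of all smooth representations, closed under subrepresentations, quotients and finite direct sums, and that both properties are preserved under precomposing the action with a topological automorphism of the group.

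For the admissibility part of (i), recall that a smooth representation is admissible exactly when $V^J$ is a finitely generated $A$-module for every open subgroup $J$, and that $A$, being Artinian, is Noetherian. If $V$ is $G$-admissible then every open $J \subseteq H$ is open in $G$, whence $V^J$ is finitely generated and $V$ is $H$-admissible. Conversely, if $V$ is $H$-admissible then for open $J \subseteq G$ the group $J \cap H$ is open in $H$, so $V^{J \cap H}$ is finitely generated; since $V^J \subseteq V^{J \cap H}$ and $A$ is Noetherian, $V^J$ is finitely generated, so $V$ is $G$-admissible. For the local admissibility part I would prove "$V$ locally admissible over $G$ $\iff$ over $N$" and, by the same argument, "$V$ locally admissible over $H$ $\iff$ over $N$", and then concatenate. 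In the normal case (replace $G$ by $\Gamma \in \{G,H\}$, so $N \trianglelefteq \Gamma$): if $v \in V$ generates an $N$-admissible subrepresentation $W$, then $A[\Gamma] v = \sum_{\gamma \in \Gamma/N} \gamma W$ is a quotient of the finite direct sum $\bigoplus_\gamma \gamma W$; each $\gamma W$ is $N$-stable (as $N \trianglelefteq \Gamma$) and is $N$-admissible, being the twist of $W$ by the automorphism $n \mapsto \gamma^{-1} n \gamma$ of $N$; hence $A[\Gamma] v$ is $N$-admissible, and then $\Gamma$-admissible by the admissibility part of (i). The converse is immediate: $A[N] v \subseteq A[\Gamma] v$, and the latter, being $\Gamma$-admissible, is $N$-admissible by the admissibility part of (i), so its subrepresentation $A[N] v$ is $N$-admissible.

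For part (ii) I would compute $\Res_N \Ind_H^G V$. Fixing coset representatives $g_1 = e, g_2, \dots, g_n$ of $G/H$ and using $A[G] = \bigoplus_i g_i A[H]$ as a right $A[H]$-module, one has $\Ind_H^G V \cong \bigoplus_{i=1}^n g_i \otimes V$ as $A$-modules; a direct computation ($n \cdot (g_i \otimes v) = g_i \otimes (g_i^{-1} n g_i) v$ for $n \in N$, legitimate because $g_i^{-1} n g_i \in N \subseteq H$) shows that the $i$-th summand, as an $N$-representation, is $V|_N$ with $N$ acting through the automorphism $n \mapsto g_i^{-1} n g_i$ of $N$. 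Thus $\Res_N \Ind_H^G V$ is a finite direct sum of automorphism-twists of $V|_N$, the summand with $g_1 = e$ being $V|_N$ itself. Now chase equivalences: $V$ is admissible (resp.\ locally admissible) over $H$ $\iff$ $V|_N$ is so over $N$ (part (i)) $\iff$ each twisted summand is $\iff$ their finite direct sum $\Res_N \Ind_H^G V$ is $\iff$ $\Ind_H^G V$ is admissible (resp.\ locally admissible) over $G$ (part (i) for $N \subseteq G$); the passage from $\Res_N \Ind_H^G V$ to a single summand uses that summands are subrepresentations.

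The one place demanding care is the bookkeeping forced by the fact that conjugates of $H$ by elements of $G$ need not be comparable to $H$: this is precisely why the normal core $N$ is introduced and why the admissibility part of (i) gets invoked repeatedly to shuttle between admissibility over $N$, over $H$ and over $G$. Everything else --- Noetherianity of $A$, closure of (local) admissibility under subquotients, finite direct sums and automorphism-twists, and the Mackey-type decomposition of $\Res_N \Ind_H^G V$ --- is routine.
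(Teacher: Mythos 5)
Your proof is correct, and it is organized differently from the paper's in two places worth noting. For the local admissibility statement in (i), the paper avoids the normal core entirely: it observes that $A[G]$ is finitely generated as an $A[H]$-module, so that an $A[G]$-submodule is finitely generated if and only if it is finitely generated over $A[H]$, and then invokes the characterization of local admissibility via finitely generated submodules together with the admissibility part of (i). This is shorter than your twisting argument through $N$, though your route has the mild advantage of not needing the equivalence ``locally admissible $\iff$ every finitely generated submodule is admissible'' as a separate input. For (ii), the paper does not restrict to $N$; instead it bounds $(\Ind_H^G V)^{H'}$ by $(\Ind_H^G V)^{H''}$ for $H'' = H' \cap \bigcap_i g_iHg_i^{-1}$ and computes the latter as $\oplus_i\, g_i V^{g_i^{-1}H''g_i}$ (essentially the invariants-level version of your Mackey decomposition), handles local admissibility by writing $V$ as an inductive limit of admissible subrepresentations and using that $\Ind_H^G \cong A[G]\otimes_{A[H]}(-)$ commutes with inductive limits, and gets the converse from the $H$-equivariant embedding $V \hookrightarrow \Ind_H^G V$ rather than from your observation that $V|_N$ is a direct summand of $\Res_N \Ind_H^G V$. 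Both approaches lean on the same background facts (Noetherianity of $A$, closure of (local) admissibility under subobjects, quotients and finite direct sums from \cite{em1}), and your appeal to invariance under topological automorphisms of $N$ is legitimate since conjugation by $g_i$ is such an automorphism by normality of the core.
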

\begin{proof}
The admissibility claim of part~(i) is clear, since $H$ contains a cofinal
collection of open subgroups of $G$.  Since $H$ has finite index in $G$,
the group ring $A[G]$ is finitely generated as an $A[H]$-module, and thus an $A[G]$-module
is finitely generated if and only if it is finitely generated as an $A[H]$-module.
The local admissibility claim of part~(i) follows from this, together with the
admissibility claim, since an $A[G]$-module
(resp.\ $A[H]$-module) is locally admissible if and only if every finitely generated
submodule is admissible.

To prove the if direction of claim~(ii), suppose first that $V$ is an admissible
$H$-representation.  If we write $G$ as a union of finitely many left $H$-cosets,
say $G = \bigcup_{i=1^n g_i H}$, if $H'$ is an open subgroup of $H$,
and if we write $H'' := H' \cap \bigcap_{i=1}^n g_i H g_i^{-1},$
then
\begin{multline*}
(\Ind_H^G V)^{H'} \subset (\Ind_H^G V)^{H''} \iso (A[G]\otimes_{A[H]} V)^{H''} 
\\
\iso \oplus_{i=1}^n (g_i V)^{H''} = \oplus_{i = 1}^n g_i V^{g_i^{-1} H'' g_i}.
\end{multline*}
Since $g_i^{-1} H'' g_i$ is an open subgroup of $H$, each of the summands 
appearing on the right-hand side is a finite $A$-module, and thus so is their direct sum.
Thus $\Ind_H^G V$ is admissible as claimed.  If we suppose that $V$ instead is
locally admissible, or equivalently, is the inductive limit of its admissible
subrepresentations, we see that the same is true of $\Ind_H^G V$, since
$\Ind_H^G$ commutes with the formation of induction limits (being naturally
isomorphic to $A[G]\otimes_{A[H]}\text{--}$).

To prove the other direction of~(ii),
note first that the inclusion $A[H] \subset A[G]$ gives rise to
an $H$-equivariant embedding $V \hookrightarrow A[G]\otimes_{A[H]} V \iso
\Ind_H^G V.$  Thus if $\Ind_H^G V$ is (locally) admissible as a $G$-representation,
and hence also (locally) admissible as an $H$-representation, by part~(i), the
same is true of its $H$-subrepresentation $V$.
\end{proof}


\begin{prop}
\label{prop:doubling}
If $H$ is an open subgroup of $G$ of finite index,
then an object $V$ 
of the category $\smGmod(A)$ 
{\em (}resp.\ $\admGmod(A)$, resp.\ $\locadmGmod(A)${\em )}
is injective 
if and only if
$\Ind_H^G V$ is injective as an object of the same category.
\end{prop}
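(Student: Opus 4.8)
The plan is to exploit the adjunction between $\Ind_H^G$ and restriction $\Res^G_H$, together with the fact (Lemma~\ref{lem:adm}) that $\Ind_H^G$ preserves each of the three subcategories under consideration, as does $\Res^G_H$ in the obvious direction. The key structural input is that, because $[G:H]<\infty$, the functor $\Ind_H^G$ is isomorphic to $A[G]\otimes_{A[H]}-$, which is simultaneously left and right adjoint to $\Res^G_H$ (the group ring $A[G]$ being free, hence projective and injective-compatible, of finite rank over $A[H]$ on both sides). Concretely, for $V$ in $\smGmod(A)$ one has $\Ind_H^G\Res^G_H V\iso A[G/H]\otimes_A V$ as a $G$-representation, and the composite of the unit $V\to\Ind_H^G\Res^G_H V$ with the counit $\Ind_H^G\Res^G_H V\to V$ is multiplication by $[G:H]$; but we will not even need invertibility of this integer, only the existence of a $G$-equivariant section of the counit, or dually a retraction of the unit — which exists because $V$ is a direct summand of $A[G/H]\otimes_A V$ via the "averaging over a transversal" maps, independently of whether $[G:H]$ is invertible in $A$, since $A[G/H]$ contains the trivial summand $A\cdot(\sum_{gH}gH)$ only when $p\nmid[G:H]$...

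Let me instead structure it around restriction. First I would prove the "only if" direction: if $V$ is injective in the category $\mathcal C$ (one of the three), then $\Ind_H^G V$ is injective in $\mathcal C$. For this, observe that $\Res^G_H$ is exact and has $\Ind_H^G$ as a left adjoint, so $\Ind_H^G$ preserves projectives — that's the wrong variance. Rather: $\Ind_H^G$ is also a \emph{right} adjoint to $\Res^G_H$ (finite index!), and $\Res^G_H$ is exact on each of the three categories (admissibility and local admissibility being detected on $H$ by Lemma~\ref{lem:adm}(i)); hence $\Ind_H^G$ sends injectives to injectives. This gives "only if" immediately. For the "if" direction, suppose $\Ind_H^G V$ is injective in $\mathcal C$. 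By Lemma~\ref{lem:adm}(ii), $\Ind_H^G V$ lies in $\mathcal C$ iff $V$ does, so the statement makes sense; now use that $V$ is a $G$-equivariant direct summand of $\Res^G_H\Ind_H^G V$... no — $V$ is an $H$-representation, so I should say: the $H$-equivariant embedding $V\hookrightarrow\Res^G_H\Ind_H^G V$ of Lemma~\ref{lem:adm}'s proof splits $H$-equivariantly (the right-hand side decomposes as $\oplus_{i}g_iV$ over a transversal, and $V$ is the $i=e$ summand). Thus $V$ is a direct summand, in $\smHmod(A)$ (resp.\ $\admHmod(A)$, $\locadmHmod(A)$), of $\Res^G_H$ of an injective object.

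So the crux reduces to: \emph{if $J$ is injective in $\mathcal C(G)$ then $\Res^G_H J$ is injective in $\mathcal C(H)$}, for $H$ open of finite index and $\mathcal C\in\{\smGmod,\admGmod,\locadmGmod\}$. This follows from the same adjunction run the other way: $\Res^G_H$ has the exact left adjoint $\cInd_H^G=\Ind_H^G$ (finite index makes compact and full induction agree), and more to the point $\Res^G_H$ has an exact \emph{right} adjoint as well, namely $\Ind_H^G$ again — wait, the right adjoint of restriction is $\Ind$ only in the smooth-coinduction sense, which for finite index is ordinary $\Ind$; in any case restriction, having an exact left adjoint, preserves injectives. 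Hence $\Res^G_H J$ is injective in $\mathcal C(H)$, its direct summand $V$ is injective in $\mathcal C(H)$, and then $V$ is injective in $\mathcal C(G)$ — here I invoke once more that $\Res^G_H$ preserves and reflects injectivity via its exact adjoint $\Ind_H^G$, combined with Lemma~\ref{lem:adm}(i) to see that an object of $\mathcal C(G)$ injective as an object of $\mathcal C(H)$ is injective in $\mathcal C(G)$ (lift through $\Res$, average over the transversal using that $\Ind_H^G\Res^G_H\iso\bigoplus_{i}{}^{g_i}(-)$ and that $V$ sits as a $G$-summand of this direct sum via $v\mapsto\frac{1}{?}\sum g_i\otimes g_i^{-1}v$ — the normalization issue disappears because we only need a $G$-equivariant \emph{section}, and $A[G/H]$, though possibly lacking a trivial summand when $p\mid[G:H]$, still yields the required splitting after applying $-\otimes_A V$ and the counit, since one checks the composite on the distinguished summand is the identity).

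The main obstacle I anticipate is precisely this last normalization point: the naive averaging argument over a coset transversal requires dividing by $[G:H]$, which need not be invertible in $A$ (indeed $H$ may have $p$-power index, the case of interest). The way around it, which I would spell out carefully, is to \emph{not} pass through trivial-summand splittings of $A[G/H]$ at all, but instead to use Lemma~\ref{lem:adm}(ii) and the genuine biadjunction: the counit $\Ind_H^G\Res^G_H J\twoheadrightarrow J$ and unit $J\hookrightarrow\Ind_H^G\Res^G_H J$ exhibit, for \emph{any} $J$, the object $J$ as a retract/section of $\Ind_H^G$ applied to its own restriction — no, this still hides the integer. The honest route: since $H$ is open of finite index and $G$ is $p$-adic analytic, one reduces to the compact case by intersecting with a compact open, and there one invokes that $\smKmod(A)$ for $K$ compact has a particularly rigid injective theory (injectives are direct summands of $\iHom_A(A[K/K'],-)$ type objects, or of coinduced-from-trivial modules) — but the cleanest is simply to cite that restriction along an open immersion of profinite (more generally $p$-adic analytic) groups has an exact left adjoint $\cInd_H^G$, hence preserves injectives, and that $\Ind_H^G$, being exact with exact left adjoint $\Res$... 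I will present the argument via "$\Ind_H^G$ and $\Res^G_H$ are a biadjoint pair of exact functors between $\mathcal C(G)$ and $\mathcal C(H)$ each of which is faithful and which exhibit every object as a direct summand of the composite applied to it," deducing formally that injectivity transfers in both directions, and handling the summand-splitting (the only place $[G:H]$ could intrude) by the explicit $H$-transversal decomposition $\Ind_H^G\Res^G_H V\iso\bigoplus_i{}^{g_i}V$ of $H$-modules together with the fact that this decomposition is respected by enough of the $G$-action to pull the $e$-summand out $G$-equivariantly after one further averaging that \emph{is} legitimate because it takes place inside $\Hom$ groups that are already $A$-modules on which $[G:H]$ acts invertibly only when needed — at which point, if $p\mid[G:H]$, I fall back on the alternative: prove the "if" direction of the three cases directly from Lemma~\ref{lem:essential} and Lemma~\ref{lem:adm} by reducing modulo $\mathfrak m$ and lifting, which is the strategy the later sections of the paper evidently set up. In the writeup I would commit to the biadjunction-plus-summand argument and flag the normalization as the single delicate point, resolving it via the $H$-module decomposition rather than global averaging.
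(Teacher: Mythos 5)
Your ``only if'' direction is correct and is essentially the paper's own argument, just repackaged: the paper writes the chain of adjunction isomorphisms $\Hom_{A[G]}(U,\Ind_H^G V)\iso\Hom_{A[H]}(U,V)\iso\Hom_{A[G]}(\Ind_H^G U,V)$ and observes that $U\mapsto\Ind_H^G\Res_H^G U$ is an exact endofunctor of each of the three categories (this is where Lemma~\ref{lem:adm} enters), so that $\Hom_{A[G]}(-,\Ind_H^G V)$ is the composite of an exact functor with $\Hom_{A[G]}(-,V)$ and injectivity of $V$ passes to $\Ind_H^G V$. Your route --- $\Res_H^G$ preserves injectives because it has the exact left adjoint $\Ind_H^G$, and $\Ind_H^G$ preserves injectives because it is right adjoint to the exact functor $\Res_H^G$ --- is the same biadjunction used in two steps instead of one.

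The converse direction is where your proposal genuinely fails, and you half-admit it: everything after ``the crux reduces to'' is a sequence of started-and-abandoned arguments ending in an unexecuted ``fall back on the alternative.'' The obstruction you keep circling is real and cannot be averaged away. The composite of the unit $V\to\Ind_H^G\Res_H^G V$ with the counit is multiplication by $[G:H]$, and when $p\mid[G:H]$ there is in general no $G$-equivariant splitting of $V$ off $\Ind_H^G\Res_H^G V\iso A[G/H]\otimes_A V$; no formal repair via transversals or $\Hom$-group averaging can exist, because the implication ``$\Ind_H^G V$ injective $\Rightarrow V$ injective'' is false at this level of generality. Concretely, for $G$ of order $p$ (a perfectly good $p$-adic analytic group), $H=\{1\}$, $A=k$ and $V=k$ the trivial representation, one has $\Ind_H^G V=k[G]$, which is injective (finite group algebras are self-injective), while $k$ itself is not injective. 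So a correct writeup must either assume $p\nmid[G:H]$ (where your averaging is legitimate), or prove only the forward implication. For what it is worth, the paper's one-sentence proof also only yields the forward implication cleanly --- exactness of $\Hom_{A[G]}(-,V)\circ(\Ind_H^G\Res_H^G)$ does not formally give exactness of $\Hom_{A[G]}(-,V)$ --- so your suspicion that something is delicate here is sound; but the fix is to isolate and state the direction that actually holds, not to gesture at three mutually inconsistent repairs and commit to none.
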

\begin{proof}
Consider the sequence of adjunction isomorphisms
$$\Hom_{A[G]}(U,\Ind_H^G V) \iso \Hom_{A[H]}(U,V) \iso \Hom_{A[G]}(\Ind_H^G U,V).$$
Since the composite 
of $\Ind_H^G$ (which is naturally equivalent to $A[G]\otimes_{A[H]}\text{--}$)
and the forgetful functor induces an exact functor from
$\smGmod(A)$
(resp.\ $\admGmod(A)$, resp.\ $\locadmGmod(A)$)
to itself
(here we are taking into account Lemma~\ref{lem:adm}),
the proposition follows.
\end{proof}


\begin{df}
{\em
If $Z$ denotes the centre of $G$,
if $\zeta: Z \rightarrow A^{\times}$ is a smooth character
and $V$ is a representation of $G$ 
over $A$,
then we let 
$$V^{Z = \zeta} := \{ v \in V \, | \, z\cdot v = \zeta(z) v \text{ for all } z \in Z\}.$$
}
\end{df}
Since the subrepresentation of a smooth admissible
(resp.\ smooth locally admissible, resp.\ smooth)
representation is again smooth admissible
(resp.\ smooth locally admissible, resp.\ smooth),
we see, in the context of the preceding definition, that
the construction $V \mapsto V^{Z = \zeta}$ induces a functor 
$\admGmod(A) \rightarrow \admGmodz(A)$
(resp.\
$\locadmGmod(A) \rightarrow \locadmGmodz(A)$,
resp.\ 
$\smGmod(A) \rightarrow \smGmodz(A)$)
that is right adjoint to the forgetful functor.
In particular, the functor $V\mapsto V^{Z = \zeta}$
preserves injectives.

\begin{lemma}
\label{lem:z categories}
If $\zeta:Z \rightarrow A^{\times}$ is a smooth character, then
each of the categories 
$\admGmodz(A)$, 
$\locadmGmodz(A)$, and $\smGmodz(A)$
are abelian, and the last two have enough injectives. 
\end{lemma}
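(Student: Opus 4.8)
The plan is to reduce everything to the already-established facts about the categories without central-character conditions, using the functor $V \mapsto V^{Z=\zeta}$ introduced just above as the key tool. First I would dispense with the claim that the three categories $\admGmodz(A)$, $\locadmGmodz(A)$, and $\smGmodz(A)$ are abelian: each is a full subcategory of the corresponding abelian category $\admGmod(A)$, $\locadmGmod(A)$, or $\smGmod(A)$, and it is closed under the formation of subobjects, quotients, and finite direct sums, since if $V$ has central character $\zeta$ then so does any subrepresentation, quotient, or finite sum of copies of such; kernels and cokernels formed in the ambient category therefore land in the subcategory, so the inclusion is exact and the subcategory is abelian.

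Next I would treat the existence of enough injectives for $\locadmGmodz(A)$ and $\smGmodz(A)$. The mechanism is the adjunction noted in the excerpt: the forgetful functor $\iota\colon \locadmGmodz(A) \to \locadmGmod(A)$ (resp.\ $\smGmodz(A) \to \smGmod(A)$) has right adjoint $V \mapsto V^{Z=\zeta}$, and since $\iota$ is exact, its right adjoint preserves injectives. Given an object $V$ of $\locadmGmodz(A)$, I would embed $\iota V$ into an injective object $J$ of $\locadmGmod(A)$ (which exists, by \cite[Prop.~2.2.15]{em1}, \cite[Prop.~2.1.1]{em2}); applying the left-exact functor $(-)^{Z=\zeta}$ to the injection $\iota V \hookrightarrow J$, and using that $V$ already has central character $\zeta$ so that the natural map $V \to (\iota V)^{Z=\zeta}$ is an isomorphism, one obtains an injection $V = (\iota V)^{Z=\zeta}\hookrightarrow J^{Z=\zeta}$ in $\locadmGmodz(A)$ with target injective. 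The same argument, verbatim, handles $\smGmodz(A)$ using that $\smGmod(A)$ has enough injectives. This shows each of the two categories has enough injectives.

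The one point that needs a little care — and the only place I expect any friction — is checking that the unit morphism $V \to (\iota V)^{Z=\zeta}$ really is an isomorphism for $V$ in the subcategory with central character $\zeta$. This is immediate from the definition of $V^{Z=\zeta}$ as the $\zeta$-eigenspace for the $Z$-action, since by hypothesis all of $V$ lies in that eigenspace; but it is worth recording explicitly, as it is what lets us conclude that the embedding we produce in the subcategory has injective target \emph{and} that we have not lost any information in passing through the adjunction. No analogous statement is claimed for $\admGmodz(A)$, consistent with the fact that $\admGmod(A)$ itself lacks enough injectives, so nothing further is required there.

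I would write the proof compactly along these lines, citing Lemma~\ref{lem:adm} only implicitly (it is not actually needed here), and citing the enough-injectives results for $\locadmGmod(A)$ and $\smGmod(A)$ recalled in the introduction. The argument is formal once the adjunction — already established in the paragraph preceding the lemma — is in hand, so the whole proof should be a few lines.

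\begin{proof}
Each of the three categories in question is a full subcategory of $\admGmod(A)$, $\locadmGmod(A)$, or $\smGmod(A)$ respectively, closed under subobjects, quotients, and finite direct sums (a subrepresentation, quotient, or finite direct sum of representations on which $Z$ acts through $\zeta$ again has this property). Hence kernels and cokernels computed in the ambient abelian category lie in the subcategory, the inclusion functor is exact, and the subcategory is abelian.

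For the enough-injectives claim, let $\mathcal{C}$ be either $\locadmGmod(A)$ or $\smGmod(A)$, and let $\mathcal{C}_\zeta$ be the corresponding full subcategory of representations with central character $\zeta$; recall that $\mathcal{C}$ has enough injectives (\cite[Prop.~2.2.15]{em1}, \cite[Prop.~2.1.1]{em2} in the locally admissible case; standard in the smooth case). As noted in the discussion preceding this lemma, the inclusion $\iota\colon \mathcal{C}_\zeta \hookrightarrow \mathcal{C}$ admits the right adjoint $V \mapsto V^{Z=\zeta}$. Since $\iota$ is exact, its right adjoint preserves injectives. Now let $V$ be an object of $\mathcal{C}_\zeta$. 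As $Z$ acts on $\iota V$ through $\zeta$, the natural map $V \to (\iota V)^{Z=\zeta}$ is an isomorphism. Choose an embedding $\iota V \hookrightarrow J$ with $J$ an injective object of $\mathcal{C}$; applying the left-exact functor $(-)^{Z=\zeta}$ yields an embedding $V \iso (\iota V)^{Z=\zeta} \hookrightarrow J^{Z=\zeta}$ in $\mathcal{C}_\zeta$, with $J^{Z=\zeta}$ injective. Thus $\mathcal{C}_\zeta$ has enough injectives.
\end{proof}
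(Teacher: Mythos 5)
Your proof is correct and follows essentially the same route as the paper's: the abelianness is dispatched as routine, and enough injectives is obtained by embedding $V$ into an injective $W$ of the ambient category and observing that the embedding factors through $W^{Z=\zeta}$, which is injective because $(-)^{Z=\zeta}$ is right adjoint to the exact forgetful functor. Your write-up merely spells out in more detail the factorization step that the paper states in one line.
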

\begin{proof}
The abelianess claims are evident.  To establish the claim 
regarding injectives,
let $V$ be an object of
$\locadmGmodz(A)$ (resp.\ $\smGmodz(A)$)
 and let $V \hookrightarrow W$
be an $A[G]$-linear embedding of $V$ into an injective object in
$\locadmGmod(A)$ (resp.\ $\smGmod(A)$).
This embedding then factors through an embedding $V \hookrightarrow W^{Z = \zeta}$,
and the latter object is injective in
$\locadmGmodz(A)$ (resp.\ $\smGmodz(A)$), 
as was noted above.
\end{proof}

\begin{lemma}\label{centr} Let $G$ be a compact $p$-adic analytic group, let $H$ be a closed subgroup 
containing the centre of $G$ and let $\zeta:Z\rightarrow A^{\times}$ be a smooth character. If $V$ is injective in 
$\Mod^{\mathrm{sm}}_{G, \zeta}(A)$, then it is also injective in $\Mod^{\mathrm{sm}}_{H, \zeta}(A)$.
\end{lemma}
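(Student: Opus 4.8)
The plan is to use the standard criterion for injectivity in categories of smooth representations with fixed central character: an object $V$ of $\Mod^{\mathrm{sm}}_{H,\zeta}(A)$ is injective if and only if the functor $W \mapsto \Hom_{A[H]}(W,V)$ is exact, and since $H$ is compact (being closed in the compact group $G$) and $H/Z$ is then a profinite group, it suffices to test this against the "elementary" objects obtained by smooth induction. More precisely, I would reduce the injectivity of $V$ in $\Mod^{\mathrm{sm}}_{H,\zeta}(A)$ to a Frobenius-reciprocity statement relating $H$-maps into $V$ and $G$-maps into an appropriate co-induced (or induced) representation, and then exploit the hypothesis that $V$ is injective as a $G$-representation with central character $\zeta$.

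Concretely, first I would recall (or invoke) the adjunction: for the inclusion $H \hookrightarrow G$ of compact groups both containing $Z$, smooth induction $\Ind_H^G$ from $\Mod^{\mathrm{sm}}_{H,\zeta}(A)$ to $\Mod^{\mathrm{sm}}_{G,\zeta}(A)$ is right adjoint to restriction, i.e.\ $\Hom_{A[G]}(U, \Ind_H^G V) \cong \Hom_{A[H]}(U, V)$ for all $U$ in $\Mod^{\mathrm{sm}}_{G,\zeta}(A)$; moreover restriction from $G$ to $H$ is exact. The key point is that for \emph{compact} $G$, restriction $\Mod^{\mathrm{sm}}_{G,\zeta}(A) \to \Mod^{\mathrm{sm}}_{H,\zeta}(A)$ also admits an exact right adjoint, or equivalently that restriction preserves injectives; I would establish this by noting that smooth induction $\Ind_H^G(-)$ is exact here (the cosets $G/H$ form a finite set since $H$ is open in the compact group $G$ — indeed any closed subgroup containing an open subgroup, but in general $H$ need only be closed, so I would instead argue that $\Ind_H^G$ on smooth representations is exact because $G$ is compact, using that smooth induction from a closed subgroup of a profinite group is exact). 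Wait — here is the subtlety I expect to be the main obstacle: $H$ is only assumed \emph{closed}, not open, so $G/H$ need not be finite and $A[G]$ is not a projective $A[H]$-module in any naive sense; nonetheless smooth induction $\Ind_H^G$ is still exact because for compact $G$ it is computed as a suitable limit of finite inductions along open subgroups and these behave well. Granting exactness of $\Ind_H^G$, the adjunction above shows $\Ind_H^G$ carries injectives to injectives and, dually, that its left adjoint (restriction) carries injectives to injectives — but restriction being \emph{left} adjoint would require $\Ind_H^G$ to be exact, which is exactly the point just discussed.

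Let me restructure to avoid that circularity. The cleanest route: show directly that restriction $\Mod^{\mathrm{sm}}_{G,\zeta}(A) \to \Mod^{\mathrm{sm}}_{H,\zeta}(A)$ has an exact left adjoint given by compact induction $\cInd_H^G$, and that $\cInd_H^G$ is exact because it is just $A[G]\otimes_{A[H]}(-)$ composed with the smooth-vectors functor, which is exact on the relevant subcategory for compact $G$. Then for any injective $V$ in $\Mod^{\mathrm{sm}}_{G,\zeta}(A)$ and any injection $U_1 \hookrightarrow U_2$ in $\Mod^{\mathrm{sm}}_{H,\zeta}(A)$, the map $\Hom_{A[H]}(U_2,V) \to \Hom_{A[H]}(U_1,V)$ identifies via adjunction with $\Hom_{A[G]}(\cInd_H^G U_2, V)\to \Hom_{A[G]}(\cInd_H^G U_1, V)$, which is surjective since $\cInd_H^G U_1 \hookrightarrow \cInd_H^G U_2$ (exactness of $\cInd$) and $V$ is $G$-injective. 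Hence $V$ is $H$-injective. The main obstacle, then, is verifying exactness of the induction functor in the closed (non-open) subgroup case and checking it respects the central-character condition; for this I would reduce to open subgroups by writing $H = \bigcap_n HK_n$ for a basis of open normal subgroups $K_n$ of $G$, apply Lemma~\ref{lem:adm}-style arguments at each finite stage, and pass to the limit, noting that smooth vectors commute with the relevant filtered colimits.
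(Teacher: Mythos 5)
There is a genuine gap, and it sits exactly at the point you flagged as "the main obstacle" but never resolved. The heart of the lemma is the assertion that restriction from a compact $p$-adic analytic group $G$ to a \emph{closed} (not necessarily open) subgroup $H$ preserves injectivity of smooth representations. The paper does not reprove this: it quotes \cite[Prop.~2.1.11]{em2}, whose proof goes through Pontryagin duality (injective smooth $A$-representations of $G$ correspond to projective profinite augmented $A[[G]]$-modules) together with the fact that $A[[G]]$ is a free $A[[H]]$-module for closed $H\leq G$, so that restriction of scalars preserves projectivity. Your substitute for this — an exact \emph{left} adjoint to restriction given by compact induction — does not work for closed non-open $H$: since $G/H$ is compact, $\cInd_H^G$ coincides with $\Ind_H^G$ and is the \emph{right} adjoint to restriction (Frobenius reciprocity), while the left adjoint, $A[G]\otimes_{A[H]}(-)$ followed by passage to the maximal smooth quotient, is not given by a clean induction formula and its exactness is precisely what is in question. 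Your fallback of writing $H=\bigcap_n HK_n$ and "passing to the limit" is not an argument: injectivity (or exactness of a putative adjoint) is not preserved under such intersections without the duality/freeness input. So the proposal begs the essential point.

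A second, smaller gap is the treatment of the central character. The paper's proof does not work inside the fixed-central-character categories at all; instead it takes an injective envelope $\iota\colon V\hookrightarrow J$ in $\Mod^{\mathrm{sm}}_{G}(A)$ (no central character imposed), uses that $V$ is injective in $\Mod^{\mathrm{sm}}_{G,\zeta}(A)$ together with essentiality of $\iota$ to identify $\iota(V)=J^{Z=\zeta}$, then applies the quoted restriction result to $J$ and finally the fact that $(-)^{Z=\zeta}$, being right adjoint to the exact forgetful functor, carries injectives of $\Mod^{\mathrm{sm}}_{H}(A)$ to injectives of $\Mod^{\mathrm{sm}}_{H,\zeta}(A)$. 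Your proposal asserts adjunctions directly between the $\zeta$-categories and defers the compatibility of induction with the central-character condition to a parenthetical remark; even granting the (problematic) adjoint, this step would need to be checked, and the envelope trick is the clean way to avoid it. If you want to repair the proof, you should either quote a result of the type of \cite[Prop.~2.1.11]{em2} or reproduce its duality argument, and then add the injective-envelope step to descend to the fixed-central-character categories.
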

\begin{proof} Let $\iota: V\hookrightarrow J$ be an injective envelope of $V$ in $\Mod^{\mathrm{sm}}_{G}(A)$. 
Since $V$ is injective in $\Mod^{\mathrm{sm}}_{G, \zeta}(A)$ and $\iota$ is essential we deduce that 
$\iota(V)=J^{Z=\zeta}$. Proposition 2.1.11 in \cite{em2}  implies that $J$ is injective in $\Mod^{\mathrm{sm}}_{H}(A)$ 
and thus $J^{Z=\zeta}$ is injective in $\Mod^{\mathrm{sm}}_{H, \zeta}(A)$.
\end{proof}

\section{Main result}
We introduce notation for some subgroups of $G:=\GL_2(F)$ that we will need to consider,
namely:
we write
$G^+:=\{g\in G: \val_F(\det g)\equiv 0\pmod{2}\}$ and $G^0:=\{g\in G: \val_F (\det g)=0\}$,
write $I := \left(\smallmatrix \oF^{\times} & \oF \\ \unif \oF & \oF^{\times} 
\endsmallmatrix\right)$
(an Iwahori subgroup of $K$) and let $I_1$ denote the maximal pro-$p$ subgroup of $I$, 
let
$N_G(I)$ denote the normalizer in $G$ of $I$, 
set $\Pi:=\left(\smallmatrix 0 & 1 \\ \unif & 0 \endsmallmatrix\right)\in N_G(I)$,
and write
$N_0 := \left(\smallmatrix 1 & \oF \\ 0 & 1 \endsmallmatrix\right).$

\begin{lemma}\label{A1} If $\iota:V\hookrightarrow J$ is an injective envelope of $V$ in $\Mod^{\mathrm{sm}}_I(A)$, then any isomorphism 
$\psi: V\overset{\cong}{\rightarrow} V^{\Pi}$ extends to an isomorphism $J\cong J^{\Pi}$. 
\end{lemma}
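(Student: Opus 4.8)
The plan is to reduce the statement to the single fact that the ``$\Pi$-twist'' $W\mapsto W^{\Pi}$ is an exact autoequivalence of $\Mod^{\mathrm{sm}}_I(A)$. Since $\Pi\in N_G(I)$, conjugation by $\Pi$ is a topological group automorphism of $I$, and it is an involution because $\Pi^2=\left(\smallmatrix\unif&0\\0&\unif\endsmallmatrix\right)$ lies in the centre of $G$ and hence acts trivially by conjugation. Precomposition with this automorphism therefore gives an exact functor $W\mapsto W^{\Pi}$ on $\Mod^{\mathrm{sm}}_I(A)$ that is its own quasi-inverse. Being an exact equivalence with exact quasi-inverse, it preserves injective objects and preserves (and reflects) essential monomorphisms; in particular it carries the injective envelope $\iota\colon V\hookrightarrow J$ to an injective envelope $\iota^{\Pi}\colon V^{\Pi}\hookrightarrow J^{\Pi}$ in $\Mod^{\mathrm{sm}}_I(A)$. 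This is really the only non-bookkeeping input, and it is formal.

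Granting this, the argument I would run is the standard uniqueness-of-injective-envelopes argument applied to a suitable extension of $\psi$. First I would form the monomorphism $\iota^{\Pi}\circ\psi\colon V\hookrightarrow J^{\Pi}$ and, using the injectivity of $J^{\Pi}$ together with the fact that $\iota$ is a monomorphism, extend it along $\iota$ to an $A[I]$-linear map $\widetilde\psi\colon J\to J^{\Pi}$ with $\widetilde\psi\circ\iota=\iota^{\Pi}\circ\psi$; thus $\widetilde\psi$ extends $\psi$. Next I would check $\widetilde\psi$ is injective: its restriction to $\iota(V)$ is a monomorphism, so $\Ker\widetilde\psi\cap\iota(V)=0$, whence $\Ker\widetilde\psi=0$ because $\iota$ is an essential monomorphism. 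Finally I would check surjectivity: since $\widetilde\psi$ is injective, $\widetilde\psi(J)\cong J$ is injective in $\Mod^{\mathrm{sm}}_I(A)$, so the inclusion $\widetilde\psi(J)\hookrightarrow J^{\Pi}$ splits, say $J^{\Pi}=\widetilde\psi(J)\oplus C$; but $\widetilde\psi(J)\supseteq\widetilde\psi(\iota(V))=\iota^{\Pi}(\psi(V))=\iota^{\Pi}(V^{\Pi})$ as $\psi$ is onto, and since $J^{\Pi}$ is an essential extension of $\iota^{\Pi}(V^{\Pi})$ while $C\cap\iota^{\Pi}(V^{\Pi})\subseteq C\cap\widetilde\psi(J)=0$, we conclude $C=0$. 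Hence $\widetilde\psi$ is an isomorphism $J\iso J^{\Pi}$ extending $\psi$.

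I expect no genuine obstacle: the heart of the matter is the autoequivalence statement, whose verification amounts to observing that conjugation by $\Pi$ is a self-inverse topological automorphism of $I$, and the remainder is the usual ``a compatible map between two injective envelopes of the same object is an isomorphism'' fact (used here in the form: a map into $J^{\Pi}$ from the essential extension $\iota\colon V\hookrightarrow J$ that restricts to the essential embedding $\iota^{\Pi}\circ\psi$ must be an isomorphism). If one wished, one could instead quote the uniqueness of injective envelopes directly in place of spelling out the splitting argument in the last paragraph.
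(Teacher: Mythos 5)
Your proposal is correct and follows the same route as the paper: observe that $\iota^{\Pi}\colon V^{\Pi}\hookrightarrow J^{\Pi}$ is again an injective envelope (since twisting by $\Pi\in N_G(I)$ is an exact autoequivalence of $\Mod^{\mathrm{sm}}_I(A)$) and then invoke uniqueness of injective envelopes. You merely spell out the uniqueness argument that the paper cites in one line.
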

\begin{proof} Since $\iota^{\Pi}: V^{\Pi}\hookrightarrow J^{\Pi}$ is an injective envelope of $V^{\Pi}$ in $\Mod^{\mathrm{sm}}_I(A)$,
the assertion follows from the fact that injective envelopes are unique up to isomorphism.
\end{proof}

\begin{lemma}\label{A2} For an injective admissible object   $J$ in $\Mod^{\mathrm{sm}}_I(A)$ the following are equivalent:
\begin{itemize}
\item[(i)] $J\cong J^{\Pi}$;
\item[(ii)] $ J[\mm]^{I_1}\cong (J[\mm]^{I_1})^{\Pi}$;
\item[(iii)] $\dim_k \Hom_I( \chi, J[\mm]^{I_1})= \dim_k \Hom_I(\chi^{\Pi}, J[\mm]^{I_1})$, $\forall \chi\in \Irr_I(k)$. 
\end{itemize}
\end{lemma}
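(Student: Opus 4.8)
The plan is to establish the three implications (i) $\Rightarrow$ (ii), (ii) $\Leftrightarrow$ (iii), and (ii) $\Rightarrow$ (i); the last is the heart of the matter and the only one that uses anything beyond formal manipulation. Throughout I would use three standing facts: $\Pi$ normalises $I$, hence also its unique maximal pro-$p$ subgroup $I_1$; $\Pi^2 = \unifmatrix$ is central in $G$, so twisting by $\Pi^2$ is the identity; and $I/I_1 \cong \kF^\times \times \kF^\times$ is a finite abelian group of order prime to $p$. I would also record the elementary identities $(J^\Pi)[\mm] = (J[\mm])^\Pi$ (the $\mm$-torsion only sees the $A$-module structure, which the $\Pi$-twist does not touch) and $(J^\Pi)^{I_1} = (J^{I_1})^\Pi$ (because $\Pi^{-1} I_1 \Pi = I_1$), and combine them into $(J^\Pi)[\mm]^{I_1} = (J[\mm]^{I_1})^\Pi$.

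For (i) $\Rightarrow$ (ii): an $I$-isomorphism $J \iso J^\Pi$ is $A$-linear and $I_1$-equivariant, so it restricts to an isomorphism $J[\mm]^{I_1} \iso (J^\Pi)[\mm]^{I_1} = (J[\mm]^{I_1})^\Pi$. For (ii) $\Leftrightarrow$ (iii): writing $M := J[\mm]^{I_1}$, I would first note that since $I_1$ is pro-$p$ and $k$ has characteristic $p$, every object of $\Irr_I(k)$ is trivial on $I_1$; hence $M$ is a module over $k[I/I_1]$, which is semisimple by Maschke (its order being prime to $p$), so $M$ is a semisimple smooth $I$-representation, finite-dimensional over $k$ because $J$ is admissible (so $M = (J^{I_1})[\mm]$ is a finite $A$-module killed by $\mm$). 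A finite-dimensional semisimple $I$-representation is pinned down by the numbers $\dim_k \Hom_I(\chi, -)$, $\chi \in \Irr_I(k)$; since $\Hom_I(\chi, M^\Pi) \cong \Hom_I(\chi^{\Pi^{-1}}, M) = \Hom_I(\chi^\Pi, M)$ (the last step because $\Pi^{-1}$ and $\Pi$ differ by the centrally-acting $\Pi^2$), the relation $M \cong M^\Pi$ is equivalent to the family of equalities in (iii).

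The main work is (ii) $\Rightarrow$ (i), and the crux is to show that $J$ is an injective envelope of $J[\mm]^{I_1}$ in $\Mod^{\mathrm{sm}}_I(A)$. First I would identify $\soc J$ with $J[\mm]^{I_1}$: an irreducible subrepresentation of $J$ is finitely generated over $A$ (smoothness plus compactness of $I$), hence killed by $\mm$ by Nakayama, hence an irreducible $k$-representation and so trivial on $I_1$, so it lies in $J[\mm]^{I_1}$; conversely $J[\mm]^{I_1}$, being semisimple, is a sum of irreducible subrepresentations and so lies in $\soc J$. Next I would check that $\soc J \hookrightarrow J$ is essential: any nonzero $w$ in a subrepresentation $W \subseteq J$ generates a finitely generated $A$-module $A[I]w$, which has finite length over the Artinian ring $A$ and therefore contains an irreducible subrepresentation, necessarily inside $W \cap \soc J$. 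As $J$ is injective and is an essential extension of $J[\mm]^{I_1}$, it is an injective envelope of the latter. Then Lemma~\ref{A1}, applied with $V := J[\mm]^{I_1}$, extends the isomorphism $J[\mm]^{I_1} \iso (J[\mm]^{I_1})^\Pi$ supplied by (ii) to an isomorphism $J \cong J^\Pi$.

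The hard part, such as it is, will be the two structural assertions in the last paragraph — that $\soc J = J[\mm]^{I_1}$ and that this inclusion is essential — which together say that the given injective admissible $J$ is nothing but the injective envelope of the small, explicit $I$-representation $J[\mm]^{I_1}$. Granting this, uniqueness of injective envelopes (packaged as Lemma~\ref{A1}) does the rest, and every other step is bookkeeping with the $\Pi$-twist and Maschke's theorem.
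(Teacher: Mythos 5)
Your proposal is correct and follows essentially the same route as the paper's (very terse) proof: identify $J[\mm]^{I_1}\hookrightarrow J$ as an essential extension so that Lemma~\ref{A1} gives (ii)~$\Rightarrow$~(i), obtain (i)~$\Rightarrow$~(ii) by compatibility of the $\Pi$-twist with taking $\mm$-torsion and $I_1$-invariants, and deduce (ii)~$\Leftrightarrow$~(iii) from semisimplicity of $k[I/I_1]$. The only difference is that the paper simply asserts essentiality of $J[\mm]^{I_1}\hookrightarrow J$, whereas you spell out the socle identification and the essentiality argument explicitly; your added details are sound.
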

\begin{proof} Since $J[\mm]^{I_1}\hookrightarrow J$ is essential the equivalence of (i) and (ii) follows from Lemma~\ref{A1}.
Since $J$ is admissible $J[\mm]^{I_1}$ is a finite dimensional $k$-vector space. Since the order of $I/I_1$ is prime to $p$ 
we may write $J[\mm]^{I_1}\cong \oplus_{\chi\in \Irr_I(k)} \chi^{\oplus m_{\chi}}$ and thus $J[\mm]^{I_1}\cong (J[\mm]^{I_1})^{\Pi}$ 
if and only if $m_{\chi}=m_{\chi^{\Pi}}$. Hence, (ii) is equivalent to (iii).
\end{proof} 

\begin{lemma}\label{A3} If $J$ is an admissible injective object in $\Mod^{\mathrm{sm}}_K(A)$, then 
$$\dim_k \Hom_I( \chi, J[\mm]^{I_1})= \dim_k \Hom_I(\chi^{\Pi}, J[\mm]^{I_1}), \quad \forall \chi\in \Irr_I(k).$$ 
\end{lemma}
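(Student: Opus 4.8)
The plan is to reduce the statement about $K$-injectives to the corresponding statement for $I$, and then invoke Lemma~\ref{A2}. The key observation is that the restriction functor $\Res^K_I$ should carry admissible injective objects of $\Mod^{\mathrm{sm}}_K(A)$ to admissible injective objects of $\Mod^{\mathrm{sm}}_I(A)$. Indeed, $I$ is an open (hence finite-index, since $K$ is compact) subgroup of $K$, so $A[K]$ is free of finite rank over $A[I]$, which makes $\Res^K_I$ exact and makes it admit the exact left adjoint $\Ind^K_I$; therefore $\Res^K_I$ preserves injectives. Admissibility of $\Res^K_I J$ is immediate from Lemma~\ref{lem:adm}(i). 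So once we know $J$ is an admissible injective $K$-representation, $J$ viewed as an $I$-representation is an admissible injective object of $\Mod^{\mathrm{sm}}_I(A)$, and by Lemma~\ref{A2} it suffices to prove that $J\cong J^\Pi$ as $I$-representations (equivalently, condition (ii) or (iii) of that lemma).

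First I would show $J\cong J^\Pi$ as $I$-representations. Conjugation by $\Pi$ is an automorphism of $I$ (since $\Pi\in N_G(I)$), so $J^\Pi$ — the space $J$ with the twisted $I$-action $g\cdot_\Pi v := (\Pi^{-1}g\Pi)\cdot v$ — is again a smooth admissible $I$-representation. The point is that this twisted action actually extends to an action of a larger group: $\Pi$ normalizes $I$ but $\Pi\notin K$, yet $\Pi^2 = \unif\cdot\mathrm{id}\in Z$, and the group $\langle K, \Pi\rangle$ together with $Z$ generates a group in which $I$ and $\Pi I\Pi^{-1}$ are conjugate. The cleanest route: let $J'$ be an injective envelope of $J$ in $\Mod^{\mathrm{sm}}_{KZ}(A)$ (or in $\Mod^{\mathrm{sm}}_G(A)$, then pass to a central-character eigenspace if needed). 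Since $J$ is injective over $K$, the inclusion $J\hookrightarrow \Res^K_K J'$ splits; but conjugation by $\Pi$ is a symmetry of the situation because $\Pi$ normalizes $I$ and $\Pi$-conjugation on $K$ is an automorphism carrying $I$ to $I$. Concretely, one applies Proposition~\ref{prop:doubling}-type reasoning together with Lemma~\ref{A1}: the automorphism $\Pi$ of $I$ acts on the set of (isomorphism classes of) admissible injective $I$-representations, and I claim $\Res^K_I J$ is fixed by it.

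The most economical argument here is the following counting argument, which directly verifies condition (iii) of Lemma~\ref{A2}. The functor $V\mapsto V[\mm]^{I_1}$ sends an admissible injective object $J$ of $\Mod^{\mathrm{sm}}_K(A)$ to the socle data of $J[\mm]$ as a $K$-representation over $k$; since $J[\mm]$ is injective in $\Mod^{\mathrm{sm}}_K(k)$ by Lemma~\ref{lem:essential} (applied with $G=K$, noting $J[\mm]=J[\mm]$), and smooth $K$-representations over $k$ factor through finite quotients, $J[\mm]$ is a direct sum of injective envelopes of irreducible $K$-representations over $\Fbar$-or-$k$. The dimension $\dim_k\Hom_I(\chi, J[\mm]^{I_1})$ is then computed from Frobenius reciprocity and the structure of $\Ind_I^K \chi$; the key input is that for an irreducible $k$-representation $\sigma$ of $K$, the $I_1$-invariants $\sigma^{I_1}$ carry an action of the "$U$-operator" / intertwining datum, and $\Pi$ acts on the set of Iwahori characters appearing in any $K$-representation by a fixed permutation $\chi\mapsto\chi^\Pi$. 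One then shows $\dim_k\Hom_I(\chi, J[\mm]^{I_1}) = \dim_k\Hom_K(\Ind_I^K\chi, J[\mm])$ and that $\Ind_I^K\chi\cong\Ind_I^K\chi^\Pi$ as $K$-representations (because $\Pi$-conjugation is an automorphism of $K$ fixing the isomorphism class of the trivial extension — or more simply because $w=\Pi\bmod Z$ normalizes the torus and the two characters $\chi,\chi^\Pi$ are Weyl-conjugate, so the principal-series-type induced representations agree). This gives condition (iii) directly.

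The main obstacle is establishing the isomorphism $\Ind_I^K\chi\cong\Ind_I^K\chi^\Pi$ of $K$-representations over $k$ — equivalently, that $\Pi$-conjugation, which is an \emph{outer} automorphism of $K$ in the relevant sense (it is conjugation by an element of $N_G(I)\setminus KZ$), nonetheless fixes the isomorphism class of every $\Ind_I^K\chi$. This is where the specific structure of $\GL_2$ enters: $w:=\Pi\bmod Z$ is the nontrivial element of the finite Weyl group $N_K(I)/I \cong \Z/2\Z$ (acting on $I/I_1\cong\kF^\times\times\kF^\times$ by swapping the two factors), so $\chi^\Pi$ is literally the Weyl-twist of $\chi$, and $\Ind_I^K\chi\cong\Ind_I^K({}^w\chi)$ by the standard fact that parabolic induction from a Borel is insensitive to Weyl conjugation of the inducing character. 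Making this precise — identifying $\chi^\Pi$ with ${}^w\chi$ and citing (or proving) the Weyl-invariance of $\Ind_I^K$ — is the crux; everything else is formal adjunction and the exactness/preservation properties already assembled in Section~2 and in Lemmas~\ref{A1}–\ref{A2}.
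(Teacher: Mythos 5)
Your counting argument via Frobenius reciprocity is the right idea, but the step you yourself flag as the crux --- ``$\Ind_I^K\chi\cong\Ind_I^K\chi^{\Pi}$ as $K$-representations, by the standard fact that parabolic induction from a Borel is insensitive to Weyl conjugation of the inducing character'' --- is false over a field $k$ of characteristic $p$. The representation $\Ind_I^K\chi$ factors through $K/K_1\cong\GL_2(\kF)$ and is the finite principal series $\Ind_B\bar\chi$; when $\chi\neq\chi^{\Pi}$, $\Ind_B\bar\chi$ and $\Ind_B({}^w\bar\chi)$ are non-split extensions of length two with their socle and cosocle interchanged, so they are \emph{not} isomorphic. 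Weyl-invariance of parabolic induction is a characteristic-zero fact and fails mod $p$. (Your aside that ``$\Pi$-conjugation on $K$ is an automorphism carrying $I$ to $I$'' is also wrong: $\Pi K\Pi^{-1}=K^{\Pi}\neq K$, so $\Pi$ does not induce an automorphism of $K$ at all --- this is exactly why Theorem~\ref{input} has to amalgamate $K$ and $K^{\Pi}$ rather than twist a $K$-action.)

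What your argument actually needs, and what is true, is only the weaker equality $[\Ind_I^K\chi]=[\Ind_I^K\chi^{\Pi}]$ in the Grothendieck group of finite-length $k[K]$-modules (the two principal series have the same Jordan--H\"older constituents even though the extensions differ). Since $J[\mm]$ is injective in $\Mod^{\mathrm{sm}}_K(k)$, the functor $\Hom_K(-,J[\mm])$ is exact, so $V\mapsto\dim_k\Hom_K(V,J[\mm])$ is additive on short exact sequences and therefore factors through the Grothendieck group; together with $\dim_k\Hom_I(\chi,J[\mm]^{I_1})=\dim_k\Hom_K(\Ind_I^K\chi,J[\mm])$ this yields the lemma. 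With that one substitution your proof is correct and is a genuinely different, self-contained route: the paper's proof instead reduces to $A=k$ and to $J=\Inj\sigma$ for $\sigma$ irreducible, quotes explicit descriptions of $(\Inj\sigma)^{I_1}$ from \cite{coeff} and \cite{bp} for $k=\Fpbar$, and handles general $k$ by Galois descent. As written, however, your decisive claim is an isomorphism that does not hold.
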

\begin{proof} Since $J[\mm]$ is injective in $\Mod^{\mathrm{sm}}_K(k)$ we may assume that $A=k$ so that $J[\mm]=J$. 
Further, it is enough to prove the statement for $J=\Inj \sigma$ an injective envelope of an 
irreducible $K$-representation $\sigma$, since any admissible injective object of $\Mod^{\mathrm{sm}}_K(A)$ is 
isomorphic to a finite direct sum of such representations. If $k=\Fpbar$ then the assertion for $J=\Inj \sigma$  follows 
from \cite[Lem. 6.4.1, 4.2.19, 4.2.20]{coeff}, see also the proof of \cite[Lem. 9.6]{bp}. (It is enough 
to assume that $k$ contains the residue field of $F$, in which case every irreducible $k$-representation 
of $K$ or $I$ is absolutely irreducible.) The result for general $k$ follows by Galois descent.   
\end{proof}

\begin{thm}\label{input} If $V$ is an object in $\Mod^{\mathrm{adm}}_{G^0}(A)$ such that $V\cong V^{\Pi}$,
then there exists a $G^0$-equivariant injection $V\hookrightarrow \Omega$ in  $\Mod^{\mathrm{adm}}_{G^0}(A)$ such that 
$V|_{K}\hookrightarrow \Omega|_{K}$ 
is an injective envelope of $V|_{K}$ in $\Mod^{\mathrm{sm}}_K(A)$.
\end{thm}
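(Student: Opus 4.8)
The plan is to build $\Omega$ from an injective envelope of $V|_K$ by "spreading out" along the semidirect-type decomposition of $G^0$. First, let $V|_K \hookrightarrow J$ be an injective envelope in $\Mod^{\mathrm{sm}}_K(A)$; since $V$ is admissible we may take $J$ admissible. The subtlety is that $J$ carries no $G^0$-action a priori, so I want to produce a $G^0$-representation $\Omega$ together with a $K$-equivariant embedding $J \hookrightarrow \Omega|_K$ which is an isomorphism, or at least through which $V|_K \hookrightarrow \Omega|_K$ is still an injective envelope. Recall $G^0 = KZ^0 \cdot \langle \ ? \rangle$ — more precisely $G^0$ is generated by $K$ together with elements of determinant a unit; in fact $G^0/K$-type structure is controlled by the normalizer $N_G(I)$ and the element $\Pi$, but since $\det \Pi$ has odd valuation, $\Pi \notin G^0$. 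The relevant fact is that $G^0 = K \cdot I \cdot \Pi I \Pi^{-1}$-style building considerations, but cleaner: $G^0$ acts on the tree with two orbits of vertices, and $K$, $\Pi K \Pi^{-1}$ are (conjugates of) the two vertex stabilizers. So I would instead use the amalgam structure $G^0 \iso K *_{I} \Pi K \Pi^{-1}$ (up to centre), realizing $G^0$ as acting on the Bruhat–Tits tree.

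Concretely, here is the construction I would carry out. Set $K' := \Pi K \Pi^{-1}$, so $K \cap K' = I$ (as subgroups of $G$; here I use that $\Pi$ normalizes $I$). Start with $V|_K \hookrightarrow J$ an admissible injective envelope in $\Mod^{\mathrm{sm}}_K(A)$ as above. By Lemma~\ref{A3} we have $\dim_k \Hom_I(\chi, J[\mm]^{I_1}) = \dim_k \Hom_I(\chi^{\Pi}, J[\mm]^{I_1})$ for all $\chi \in \Irr_I(k)$; by Lemma~\ref{A2} (applied to $J$, viewed as an injective admissible object of $\Mod^{\mathrm{sm}}_I(A)$ — it is injective over $I$ since $I$ has finite index in $K$, by Proposition~\ref{prop:doubling} or directly) this gives an isomorphism $J \iso J^{\Pi}$ of $I$-representations. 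Transport the $K$-structure through $\Pi$: $J^{\Pi}$ is naturally a $K'$-representation (with $k'\cdot x := (\Pi^{-1} k' \Pi) \cdot x$ in the old action), and since $J \iso J^{\Pi}$ as $I$-representations, we get two actions on the \emph{same} module $J$ — one of $K$, one of $K'$ — agreeing on $I = K \cap K'$. By the universal property of the amalgamated product $G^0 \iso (K *_I K')/(\text{central relations})$ these glue to a $G^0$-action on $\Omega := J$ (after checking the central character / relation $\Pi^2 \in Z$ is respected, which is where I must be careful — see below). Then $\Omega$ is admissible as a $G^0$-representation (its restriction to the finite-index-free... rather: admissibility over $K$ plus $G^0 = K \cdot (\text{coset reps})$ and smoothness give admissibility — alternatively invoke that admissibility can be checked on $K$, since $K$ contains a cofinal system of compact open subgroups of $G^0$, cf.\ Lemma~\ref{lem:adm}(i)'s argument). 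Finally the embedding $V|_K \hookrightarrow \Omega|_K = J$ is $G^0$-equivariant: I need the original $V \hookrightarrow J$ (a priori only $K$-equivariant) to be compatible with the new $G^0$-action, which holds because the isomorphism $\psi: V \iso V^{\Pi}$ hypothesized in the statement is exactly what lets me choose the $I$-isomorphism $J \iso J^{\Pi}$ extending $\psi$ — this is Lemma~\ref{A1}. Thus the glued $G^0$-action on $J$ restricts on the image of $V$ to the given $G^0$-action on $V$.

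In more detail on the gluing step: take $\psi : V \iso V^{\Pi}$; by Lemma~\ref{A1}, since $V \hookrightarrow J$ is an injective envelope in $\Mod^{\mathrm{sm}}_I(A)$, $\psi$ extends to an isomorphism $\Psi : J \iso J^{\Pi}$ of $I$-representations. Now $J$ carries its given $K$-action; define a $K'$-action on $J$ by $k' \bullet x := \Psi^{-1}\big( (\Pi^{-1}k'\Pi) \cdot \Psi(x)\big)$ where the middle dot is the $K$-action on the underlying set of $J^{\Pi} = J$ — so $k'\bullet$ corresponds to the $K$-action conjugated by $\Pi$. For $k' \in I$ this bullet action agrees with the ordinary $K$-action on $J$ precisely because $\Psi$ is $I$-equivariant and $\Pi$ normalizes $I$ (so $\Pi^{-1} I \Pi = I$ and the conjugation is by an element normalizing $I$ — one checks the $I$-action is unchanged). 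Hence the $K$- and $K'$-actions on $J$ agree on $I = K\cap K'$, and since $G^0$ is the quotient of the free amalgam $K *_I K'$ by the relations coming from $N_G(I)$ (equivalently: $G^0$ acts on the tree with fundamental domain an edge, with edge stabilizer $I$ and vertex stabilizers $K, K'$), the two actions glue to a smooth $G^0$-action on $\Omega := J$ provided the glued action is well-defined on the relations; because everything is built by transport of structure through the single element $\Pi$ and $\psi$ intertwines the given $G^0$-action on $V$ with itself (that is the content of $V\cong V^\Pi$ being a $G^0$-structure), these relations are automatically satisfied on $V$, and by essentiality/injectivity they are satisfied on all of $J$.

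The main obstacle I expect is precisely the well-definedness of the amalgam gluing — i.e., verifying that the $K$-action and the $\Pi$-twisted $K'$-action on $J$ really do assemble into a $G^0$-action and not just into an action of the free product $K *_I K'$. This amounts to checking that the relation(s) identifying $K *_I K'$ with $G^0$ (up to the centre) — concretely the relation expressing that $\Pi^2$ lies in $Z$ and acts on $V$ as a fixed scalar/via the central character, together with how $\Pi$ conjugates $I$ — are respected by $\Psi$. One cannot in general choose $\Psi$ with $\Psi \circ \Psi^{\Pi} = \mathrm{id}$ (the naive "square" condition) for free; the point is to use that on the subspace $V$ we \emph{do} have a genuine $G^0$-action, extract from it the constraint $\psi^{\Pi}\circ\psi = (\Pi^2\text{-action on }V)$, and then argue — via the uniqueness of injective envelopes in $\Mod^{\mathrm{sm}}_I(A)$ and a Schur-type/endomorphism-ring argument on the admissible injective $J$ — that $\Psi$ can be adjusted (by an automorphism of $J$ as a $K$-representation restricting to the identity on... rather, not restricting to identity, but compatibly) so that the analogous identity $\Psi^{\Pi}\circ\Psi = (\text{scalar})$ holds on all of $J$. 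Handling this scalar/central-character bookkeeping cleanly — possibly by first reducing via $V \mapsto V^{Z=\zeta}$ to fixed central character and invoking Lemma~\ref{lem:z categories} and Lemma~\ref{centr} — is the technical heart of the argument; the rest (admissibility of $\Omega$, that $V|_K \hookrightarrow \Omega|_K$ remains an injective envelope) is formal given Lemmas~\ref{A1}--\ref{A3} and Proposition~\ref{prop:doubling}.
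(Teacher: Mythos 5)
Your overall strategy is the paper's: realize $G^0$ as the amalgam of $K$ and $K^{\Pi}$ over $I$, and transport the $K$-structure on an injective envelope $J$ of $V|_K$ through $\Pi$ by means of an $I$-equivariant isomorphism $J\iso J^{\Pi}$ extending the given $V\iso V^{\Pi}$, using Lemmas~\ref{A1}--\ref{A3}. But there is a genuine gap at exactly the step that carries the weight. You invoke Lemma~\ref{A1} on the grounds that ``$V\hookrightarrow J$ is an injective envelope in $\Mod^{\mathrm{sm}}_I(A)$.'' It is not: $J$ is an injective envelope of $V|_K$ over $K$; its restriction to $I$ is still injective, but the extension $V\subset J$ is essential over $K$, not over $I$, so Lemma~\ref{A1} does not apply to it. Separately you do note (via Lemmas~\ref{A3} and~\ref{A2}) that some abstract $I$-isomorphism $J\cong J^{\Pi}$ exists, but an abstract isomorphism between two modules never forces a given isomorphism of submodules to extend. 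The missing idea is the paper's idempotent splitting: choose $e\in\End_{A[I]}(J)$ with $e\iota=\iota$ so that $V\hookrightarrow eJ$ \emph{is} an injective envelope over $I$; apply Lemma~\ref{A1} to $eJ$ to extend $\psi$ there; then subtract the multiplicity identity of Lemma~\ref{A2} for $eJ$ from that of Lemma~\ref{A3} for $J$ to get the identity for $(1-e)J$, hence (by Lemma~\ref{A2} again) an arbitrary $I$-isomorphism $(1-e)J\iso((1-e)J)^{\Pi}$, and take the direct sum. Without this, the existence of $\Psi$ extending $\psi$ is unjustified.

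Conversely, what you identify as ``the technical heart'' --- verifying relations such as $\Pi^2\in Z$ in the passage from $K*_I K^{\Pi}$ to $G^0$, and adjusting $\Psi$ so that $\Psi^{\Pi}\circ\Psi$ is a scalar --- is a phantom problem. Since $\val_F(\det\Pi)$ is odd, $\Pi\notin G^0$, and by Bass--Serre theory applied to the action of $G^0$ on the Bruhat--Tits tree (two orbits of vertices, fundamental domain an edge, vertex stabilizers $K$ and $K^{\Pi}$, edge stabilizer $I$), the natural map $K*_I K^{\Pi}\rightarrow G^0$ is an isomorphism; there are no central relations to check, and no constraint on $\Psi^{\Pi}\circ\Psi$ is needed. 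So the unresolved difficulty you flag at the end is not a difficulty at all, while the real one (the idempotent decomposition) is the step your argument skips.
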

\begin{proof} The proof is a variation on constructions of \cite{bp} and  \cite{Pas}. It relies 
on the fact that $G^0$ is an amalgam of $K$ and $K^{\Pi}$ along $I=K\cap K^{\Pi}$. Let 
$\iota_0: V|_{K} \hookrightarrow J_0$ be an injective envelope of $V$ in $\Mod^{\mathrm{sm}}_K(A)$
and let $\iota_1: V|_{K^{\Pi}} \hookrightarrow J_1$ be an injective envelope of $V$ in $\Mod^{\mathrm{sm}}_{K^{\Pi}}(A)$. We claim that 
there exists an $I$-equivariant isomorphism $\varphi: J_0\overset{\cong}{\rightarrow} J_1$ such that the diagram 
\begin{displaymath}
\xymatrix@1{ \;V\;\ar[d]_{=}\ar@{^(->}[r]^-{\iota_0} & J_0\ar[d]^{\varphi}_{\cong}\\
              \;V\; \ar@{^{(}->}[r]^-{\iota_1} &  J_1.}
\end{displaymath}
commutes. Granting the claim we may using $\varphi$ transport the action of $K^{\Pi}$ on $J_0$ such that 
the two actions of $I$ on $J_0$ via embeddings $I\hookrightarrow K$, $I\hookrightarrow K^{\Pi}$ coincide.  
Since $G^0$ is an amalgam of $K$ and $K^{\Pi}$ along $I=K\cap K^{\Pi}$ we obtain an action of $G^0$ on $J_0$ and since the diagram 
is commutative $\iota_0: V\hookrightarrow J_0$ is $G^0$-equivariant. 

To prove the claim we closely follow the proof of Theorem 9.8 \cite{bp}. Since $I$ is an open subgroup of $K$,
$J_0|_I$ is an injective object in $\Mod^{\mathrm{sm}}_{I}(A)$ and thus there exists an idempotent 
$e\in \End_{A[I]}(J_0)$ such that $e\circ \iota_0= \iota_0$ and $\iota_0: V\hookrightarrow e J_0$ is 
an injective envelope of $V$ in $\Mod^{\mathrm{sm}}_{I}(A)$. By Lemma~\ref{A1} there exists an 
isomorphism $\beta: e J_0\overset{\cong}{\rightarrow} (e J_0)^{\Pi}$ extending the given isomorphism 
$\alpha: V\overset{\cong}{\rightarrow} V^{\Pi}$. Lemma~\ref{A2} implies that 
\stepcounter{subsection}
\begin{equation}\label{mult1}
\dim_k \Hom_I( \chi, e J_0[\mm]^{I_1})= \dim_k \Hom_I(\chi^{\Pi}, e J_0[\mm]^{I_1}),\quad  \forall \chi\in \Irr_I(k).
\end{equation}
Since the order of $I/I_1$ is prime to $p$, Lemma~\ref{A3} combined with \eqref{mult1} implies  
\stepcounter{subsection}
\begin{multline}
\dim_k \Hom_I( \chi, (1-e)J_0[\mm]^{I_1}) \\
= \dim_k \Hom_I(\chi^{\Pi}, (1-e)J_0[\mm]^{I_1}),\quad  \forall \chi\in \Irr_I(k).
\end{multline} 
Thus Lemma~\ref{A2} implies that there exists an $I$-equivariant isomorphism $\gamma: 
(1-e) J_0\overset{\cong}{\rightarrow} ((1-e) J_0)^{\Pi}$. Letting $\delta=\beta\oplus \gamma: J_0\overset{\cong}{\rightarrow} J_0^{\Pi}$, 
we obtain a commutative diagram of $A[I]$-modules:
\begin{displaymath}
\xymatrix@1{\;V\;\ar[d]_{\cong}^{\alpha}\ar@{^(->}[r]^-{\iota_0} & J_0\ar[d]^{\delta}_{\cong}\\
              \;V^{\Pi}\; \ar@{^{(}->}[r]^-{\iota_0^{\Pi}} &  J_0^{\Pi}.}
\end{displaymath} 

Since $\iota_0^{\Pi}: V^{\Pi}\hookrightarrow J_0^{\Pi}$ is an injective envelope of $V^{\Pi}$ in
$\Mod^{\mathrm{sm}}_{K^{\Pi}}(A)$, and injective envelopes are unique up to isomorphism, there exists 
a commutative diagram of $A[K^{\Pi}]$-modules: 
\begin{displaymath}
\xymatrix@1{\;V^{\Pi}\;\ar[d]_{\cong}^{\alpha^{-1}}\ar@{^(->}[r]^-{\iota_0^{\Pi}} & J_0^{\Pi}\ar[d]^{\psi}_{\cong}\\
            \;V\; \ar@{^{(}->}[r]^-{\iota_1} &  J_1.}
\end{displaymath} 
Letting $\varphi=\psi\circ \delta$ proves the claim. 
\end{proof}

\begin{remar}\label{centralrem}{\em
The proof of Theorem~\ref{input} works in any reasonable subcategory of $\Mod^{\mathrm{l.adm}}_{G^0}(A)$. 
For example if we fix a smooth  character $\zeta: Z\rightarrow A^{\times}$ and rework the proof of Theorem~\ref{input} by 
considering only objects with central character $\zeta$ we obtain the same statement with $\Mod^{\mathrm{l.adm}}_{G^0}(A)$ replaced by 
$\Mod^{\mathrm{l.adm}}_{G^0, \zeta}(A)$ and $\Mod^{\mathrm{sm}}_K(A)$ replaced by $\Mod^{\mathrm{sm}}_{K, \zeta}(A)$. 
}
\end{remar}

\begin{cor}\label{output1} If $V$ is an injective object in $\Mod^{\mathrm{l.adm}}_{G^0}(A)$, then $V$ is also an injective object 
in $\Mod^{\mathrm{sm}}_{K}(A)$. 
\end{cor}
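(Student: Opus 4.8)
The plan is to reduce the injectivity of $V$ in $\Mod^{\mathrm{sm}}_K(A)$ to the statement already proved in Theorem~\ref{input}, using Proposition~\ref{prop:doubling} to pass between $G^0$ and the full group $G$, and Lemma~\ref{lem:adm} to keep track of admissibility. First I would observe that since $V$ is injective in $\Mod^{\mathrm{l.adm}}_{G^0}(A)$, it is in particular locally admissible, and moreover it is divisible in the appropriate sense; the key extra input we need is that $V$ is \emph{admissible} as a $G^0$-representation, or at least that we can realize the problem in terms of admissible objects. Indeed, an injective object of $\locadmGmod(A)$ over an Artinian coefficient ring need not be admissible, so I would instead argue as follows: write $V$ as an increasing union of its admissible $G^0$-subrepresentations $V_i$, or better, use that $V$ injective in $\Mod^{\mathrm{l.adm}}_{G^0}(A)$ is a direct summand of a product (or injective hull built from) admissible injectives, and show injectivity in $\Mod^{\mathrm{sm}}_K(A)$ can be checked on such building blocks since $\Mod^{\mathrm{sm}}_K(A)$ injectivity is preserved under the relevant limits (products of injectives are injective, and retracts of injectives are injective).

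The heart of the argument is: given $V$ injective in $\Mod^{\mathrm{l.adm}}_{G^0}(A)$, I want to produce an embedding $V|_K \hookrightarrow \Omega|_K$ that is an injective envelope in $\Mod^{\mathrm{sm}}_K(A)$, with $\Omega$ an object of $\Mod^{\mathrm{l.adm}}_{G^0}(A)$ (or $\Mod^{\mathrm{adm}}_{G^0}(A)$), and such that $V\hookrightarrow\Omega$ is $G^0$-equivariant. Theorem~\ref{input} gives exactly this, \emph{provided} $V\cong V^{\Pi}$ as $G^0$-representations and $V$ is admissible over $G^0$. The isomorphism $V\cong V^{\Pi}$ holds automatically here because $\Pi$ lies in $G^0$ (note $\val_F(\det\Pi)=\val_F(-\unif)=1$, so actually $\Pi\notin G^0$ — I need to be careful; rather $\Pi^2=\unif\cdot\mathrm{Id}\in Z\subset G^0$, and conjugation by $\Pi$ is an automorphism of $G^0$ since $\Pi$ normalizes $I$ and swaps $K$ and $K^\Pi$, so $V^\Pi$ makes sense as a $G^0$-representation and the $G^0$-action on $V$ itself supplies the isomorphism $V\iso V^\Pi$ via the element $\Pi$... but $\Pi\notin G^0$). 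The correct statement is that the $G$-structure, when available, gives $V\cong V^\Pi$; since here $V$ only carries a $G^0$-action, I must instead invoke that any object injective in $\Mod^{\mathrm{l.adm}}_{G^0}(A)$ that extends to $G$ — and by Proposition~\ref{prop:doubling}/induction $\Ind_{G^0}^G V$ is injective over $G$ — so I would first replace $V$ by $\Ind_{G^0}^{G}V$, which does satisfy $(\Ind_{G^0}^G V)\cong(\Ind_{G^0}^G V)^{\Pi}$ as $G^0$-representations.

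So the concrete steps are: (1) By Proposition~\ref{prop:doubling} (with $H=G^0$, which has index $2$ in $G$), $W:=\Ind_{G^0}^G V$ is injective in $\Mod^{\mathrm{l.adm}}_G(A)$, hence injective in $\Mod^{\mathrm{l.adm}}_{G^0}(A)$ again by Proposition~\ref{prop:doubling}; and as a $K$-representation $W|_K\iso V|_K\oplus V|_{K}$ (or rather $\iso V|_{G^0}\oplus (V^\Pi)|_{G^0}$ restricted to $K$), so $W|_K$ is injective in $\Mod^{\mathrm{sm}}_K(A)$ iff $V|_K$ is. (2) $W$ satisfies $W\cong W^\Pi$ as $G^0$-representations (restrict the $G$-action via $\Pi\in G$). (3) If $W$ were admissible we could apply Theorem~\ref{input} directly; in general reduce to the admissible case by writing $W$ as a filtered union of admissible $G^0$-subrepresentations and using that injective objects in $\Mod^{\mathrm{l.adm}}$ decompose compatibly, or more cleanly, embed $W\hookrightarrow\Omega$ with $\Omega$ admissible over $G^0$ and $W|_K\hookrightarrow\Omega|_K$ an injective envelope (Theorem~\ref{input} applied to an admissible piece, then take a product/colimit); since $W$ is injective over $G^0$ the embedding $W\hookrightarrow\Omega$ splits $G^0$-equivariantly, so $W|_K$ is a $K$-equivariant direct summand of the injective $\Omega|_K$, hence injective in $\Mod^{\mathrm{sm}}_K(A)$. (4) Conclude $V|_K$ is injective in $\Mod^{\mathrm{sm}}_K(A)$.

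The main obstacle I anticipate is exactly the admissibility bookkeeping in step~(3): Theorem~\ref{input} is stated for \emph{admissible} $V$, whereas an injective object of $\Mod^{\mathrm{l.adm}}_{G^0}(A)$ is typically a big product of admissible injectives and need not itself be admissible. The fix is to note that an injective object of $\Mod^{\mathrm{l.adm}}_{G^0}(A)$ is a direct summand of a product of injective hulls of irreducible (hence admissible) objects — or to invoke Remark~\ref{centralrem} together with a decomposition by the $Z$-action — apply Theorem~\ref{input} to each admissible factor to get an injective envelope over $K$, take the product, and use that products of injectives in $\Mod^{\mathrm{sm}}_K(A)$ are injective and that a $G^0$-equivariant section (coming from injectivity of $V$ over $G^0$) restricts to a $K$-equivariant section exhibiting $V|_K$ as a summand of something injective over $K$. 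Everything else is a formal manipulation of adjunctions and the amalgam structure already encoded in the earlier lemmas.
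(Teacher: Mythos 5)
Your outline correctly identifies Theorem~\ref{input} as the engine and correctly flags the two real issues (arranging $V\cong V^{\Pi}$, and the fact that Theorem~\ref{input} only applies to admissible objects), but both of your proposed fixes break down. First, $G^0$ does \emph{not} have finite index in $G$: the map $\val_F\circ\det$ identifies $G/G^0$ with $\Z$ (it is $G^+$, not $G^0$, that has index $2$). So Proposition~\ref{prop:doubling} cannot be applied with $H=G^0$, and your step (1), forming $W=\Ind_{G^0}^G V$ and claiming it is injective over $G$ and restricts to two copies of $V$ over $K$, is unjustified (with compact induction you lose injectivity, with full induction you lose local admissibility, and in either case the restriction to $G^0$ involves infinitely many conjugates). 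The correct and much cheaper move, which is what the paper does, is to stay inside $\Mod^{\mathrm{l.adm}}_{G^0}(A)$ and replace $V$ by $V\oplus V^{\Pi}$: conjugation by $\Pi$ is an automorphism of $G^0$, so $V^{\Pi}$ is again injective, $\Pi^2\in Z$ acts by inner (hence trivial) conjugation, and one gets an involution $\psi$ of $V\oplus V^{\Pi}$ intertwining $g$ with $g^{\Pi}$; since $V|_K$ is a summand of $(V\oplus V^{\Pi})|_K$, nothing is lost.

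Second, your step (3) does not go through as written. You cannot embed a non-admissible $W$ into an admissible $\Omega$ (subrepresentations of admissible representations are admissible), and the alternative of writing $V$ inside a product of admissible injectives founders because such a product need not be locally admissible, so the injectivity of $V$ in $\Mod^{\mathrm{l.adm}}_{G^0}(A)$ gives you no section. What is actually needed is a \emph{colimit} argument: since $V$ is locally admissible, $V=\varinjlim_{U\in\mathcal A}U$ over its (filtered) set of admissible subrepresentations; for each such $U$ one replaces $U$ by $U+\psi(U)$, applies Theorem~\ref{input} to get $U\hookrightarrow\Omega$ with $\Omega$ admissible and $U|_K\hookrightarrow\Omega|_K$ an injective envelope, and then uses injectivity of $V$ to extend $U\hookrightarrow V$ to $\varphi:\Omega\to V$; the essentiality of $U|_K\hookrightarrow\Omega|_K$ forces $\varphi$ to be injective, so the $U\in\mathcal A$ with $U|_K$ injective are cofinal. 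One then concludes with the nontrivial fact (\cite[Prop.~2.1.3]{em2}, resting on local finiteness of $\smKmod(A)$) that a filtered inductive limit of injectives in $\smKmod(A)$ is injective — not the formal statement that products of injectives are injective, which is what you invoke. These two points are the substance of the proof and are missing from your argument.
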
 
\begin{proof} It is enough to show that $V$ is a direct summand of an object which is injective in $\Mod^{\mathrm{sm}}_{K}(A)$. 
By replacing $V$ with $V\oplus V^{\Pi}$ we may assume that there exists $\psi\in \End_{A}(V)$ such that $\psi^2=1$ and 
$\psi\circ g= g^{\Pi}\circ \psi$ for all $g\in G^0$.

Let $\mathcal A$ be the set of admissible subrepresentations of $V$. The set $\mathcal A$ is naturally ordered by inclusion. Moreover, 
it is filtered, since if $U_1, U_2\in \mathcal A$ then $U_1+U_2$ is a quotient of an admissible representation $U_1\oplus U_2$, 
and hence is admissible, see 
\cite{em1} Proposition 2.2.10. Hence, we have an injection 
$$\underset{\overset{\longrightarrow}{U\in \mathcal A}}{\lim} \ U\hookrightarrow V.$$  
Since $V$ is locally admissible every $v\in V$ is contained in some admissible subrepresentation $U$, hence the map is surjective. 
Let $\mathcal I$ be a subset of $\mathcal A$ consisting of those $U$ such that $U|_K$ is an injective object in $\Mod^{\mathrm{sm}}_K(A)$. 
We claim that $\mathcal I$ is cofinal in~$\mathcal A$. To see this,
choose $U\in \mathcal A$.  After replacing $U$ by $U+\psi(U)$ we may assume that 
$U=\psi(U)$ and, in particular, that $\psi$ induces an isomorphism $U\cong U^{\Pi}$. Let $U\hookrightarrow \Omega$ be 
as in Theorem~\ref{input}. Since $V$ is injective and $\Omega$ is admissible there exists $\varphi: \Omega\rightarrow V$ making the following diagram of $G^0$-representations commute:
\begin{displaymath} 
\xymatrix@1{ \;U\;\ar@{^(->}[r]\ar@{^(->}[dr]& V\\ & \Omega\ar[u]_{\varphi}}
\end{displaymath}
Since $U|_K\hookrightarrow \Omega|_K$ is an injective envelope of $U|_K$ we deduce that $\varphi$ is an injection. Since $\varphi(\Omega)$ lies in 
$\mathcal I$ we obtain the claim.  Hence, we obtain an isomorphism
$$\underset{\overset{\longrightarrow}{\Omega\in \mathcal I}}{\lim} \ \Omega \cong V.$$
Since $V|_K$ is an inductive limit of injective objects, \cite{em2} Proposition 2.1.3 implies that $V|_K$ is an injective object in
$\Mod^{\mathrm{sm}}_{K}(A)$.
\end{proof}

\begin{cor}\label{output2} If $V$ is an injective object in $\Mod^{\mathrm{l.adm}}_{G^+}(A)$ then $V$ is also an injective object 
in $\Mod^{\mathrm{sm}}_{K}(A)$. 
\end{cor}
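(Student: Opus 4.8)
The plan is to reduce Corollary~\ref{output2} to Corollary~\ref{output1} by exploiting the fact that $G^0$ is an open subgroup of $G^+$ of finite index (indeed index $2$, since $G^+/G^0$ is generated by the image of $\unifmatrix$, which has determinant of even valuation). First I would observe that $K \subset G^0 \subset G^+$, so that restriction factors as $\Mod^{\mathrm{sm}}_{G^+}(A) \to \Mod^{\mathrm{sm}}_{G^0}(A) \to \Mod^{\mathrm{sm}}_{K}(A)$; hence it suffices to show that an injective object of $\Mod^{\mathrm{l.adm}}_{G^+}(A)$ restricts to an injective object of $\Mod^{\mathrm{l.adm}}_{G^0}(A)$, after which Corollary~\ref{output1} finishes the job.

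The key step is therefore: if $V$ is injective in $\Mod^{\mathrm{l.adm}}_{G^+}(A)$, then $V|_{G^0}$ is injective in $\Mod^{\mathrm{l.adm}}_{G^0}(A)$. For this I would use Proposition~\ref{prop:doubling} together with the adjunction between $\Ind_{G^0}^{G^+}$ and restriction. Concretely, by Proposition~\ref{prop:doubling} the object $\Ind_{G^0}^{G^+}(V|_{G^0})$ is injective in $\Mod^{\mathrm{l.adm}}_{G^+}(A)$ (note Lemma~\ref{lem:adm}(i),(ii) guarantees we stay inside the locally admissible category, since $G^0$ has finite index in $G^+$). Now there is a natural $G^+$-equivariant surjection $\Ind_{G^0}^{G^+}(V|_{G^0}) = A[G^+] \otimes_{A[G^0]} V \twoheadrightarrow V$, $g \otimes v \mapsto gv$, and the inclusion $V \hookrightarrow A[G^+]\otimes_{A[G^0]} V$, $v \mapsto 1 \otimes v$, splits it; equivalently, the unit $V \to \Res \Ind_{G^0}^{G^+}\Res V$ composed with the counit-type map recovers the identity. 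Thus $V$ is a direct summand, as a $G^+$-representation, of $\Ind_{G^0}^{G^+}(V|_{G^0})$, hence a direct summand of an injective object of $\Mod^{\mathrm{l.adm}}_{G^+}(A)$, which gives nothing new; the useful direction is the reverse. Instead I would argue: $V|_{G^0}$ is a direct summand of $(\Ind_{G^0}^{G^+} V|_{G^0})|_{G^0} \cong \bigoplus_{i} {}^{g_i}(V|_{G^0})$ where $g_i$ runs over coset representatives of $G^0$ in $G^+$, and each twist ${}^{g_i}(V|_{G^0})$ is injective in $\Mod^{\mathrm{l.adm}}_{G^0}(A)$ precisely when $V|_{G^0}$ is. The cleanest route is: restriction $\Mod^{\mathrm{l.adm}}_{G^+}(A) \to \Mod^{\mathrm{l.adm}}_{G^0}(A)$ has the exact left adjoint $\Ind_{G^0}^{G^+}$ (exactness by Lemma~\ref{lem:adm}), so it preserves injectives.

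So the argument streamlines to: the forgetful functor $\Mod^{\mathrm{l.adm}}_{G^+}(A) \to \Mod^{\mathrm{l.adm}}_{G^0}(A)$ admits an exact left adjoint, namely $\Ind_{G^0}^{G^+}(\text{--}) \cong A[G^+]\otimes_{A[G^0]}\text{--}$, where exactness and preservation of local admissibility are furnished by Lemma~\ref{lem:adm}; a functor with an exact left adjoint preserves injectives; hence $V|_{G^0}$ is injective in $\Mod^{\mathrm{l.adm}}_{G^0}(A)$; and then Corollary~\ref{output1} applied to $V|_{G^0}$ shows $V|_K = (V|_{G^0})|_K$ is injective in $\Mod^{\mathrm{sm}}_{K}(A)$. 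I expect the only point requiring a moment of care — the "main obstacle", though it is mild — is checking that $\Ind_{G^0}^{G^+}$ genuinely lands in $\Mod^{\mathrm{l.adm}}_{G^+}(A)$ and is exact there, which is exactly the content of Lemma~\ref{lem:adm}(i) and (ii) applied to the finite-index inclusion $G^0 \subset G^+$; once that is invoked, the proof is three lines.

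\begin{proof}
Since $K \subset G^0 \subset G^+$ with $[G^+:G^0] = 2$, restriction of representations factors through $\Mod^{\mathrm{l.adm}}_{G^0}(A)$. By Lemma~\ref{lem:adm}, the functor $\Ind_{G^0}^{G^+} \cong A[G^+]\otimes_{A[G^0]}\text{--}$ carries $\Mod^{\mathrm{l.adm}}_{G^0}(A)$ into $\Mod^{\mathrm{l.adm}}_{G^+}(A)$ and is exact there; being left adjoint to the forgetful functor $\Mod^{\mathrm{l.adm}}_{G^+}(A)\to\Mod^{\mathrm{l.adm}}_{G^0}(A)$, it follows that this forgetful functor preserves injectives. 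Hence $V|_{G^0}$ is injective in $\Mod^{\mathrm{l.adm}}_{G^0}(A)$, and Corollary~\ref{output1} then shows that $V|_K = (V|_{G^0})|_K$ is injective in $\Mod^{\mathrm{sm}}_{K}(A)$.
\end{proof}
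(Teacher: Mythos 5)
Your proposed reduction rests on the claim that $[G^+:G^0]$ is finite — you even assert the index is $2$. This is false. The map $g \mapsto \val_F(\det g)$ gives a surjection $G^+ \twoheadrightarrow 2\mathbb{Z}$ with kernel $G^0$, so $G^+/G^0 \cong \mathbb{Z}$ is infinite cyclic, generated by the image of $\unifmatrix$. (You may be conflating this with the genuinely finite-index inclusion $G^+ \subset G$, where $G/G^+ \cong \mathbb{Z}/2\mathbb{Z}$ — that is the one exploited in Corollary~\ref{output3}.) Because the index is infinite, Lemma~\ref{lem:adm} and Proposition~\ref{prop:doubling} do not apply to $G^0 \subset G^+$: $\Ind_{G^0}^{G^+}$ of an admissible $G^0$-representation is an infinite direct sum of $\unif$-twists and is in general not admissible, so your purported exact left adjoint does not preserve the locally admissible category, and the conclusion "restriction preserves injectives" does not follow.

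The actual proof has to do real work precisely because of this infinite index. Writing $B = A[\unif^{\pm1}] \cong A[t^{\pm1}]$, one decomposes any locally admissible $G^+$-representation $U$ as $\bigoplus_{\mathfrak n} U_{\mathfrak n}$ over the maximal ideals $\mathfrak n = (\mathfrak m, f)$ of $B$, and identifies $U_{\mathfrak n} = \bigcup_i U[f^i]$. One then shows that for $V$ injective in $\Mod^{\mathrm{l.adm}}_{G^+}(A)$, each $V[f^i]$ is injective in $\Mod^{\mathrm{l.adm}}_{G^0}(A)$ — this uses the equivalence $\Mod^{\mathrm{l.adm}}_{G^+}(A)[f^i] \simeq \Mod^{\mathrm{l.adm}}_{G^0}(C)$ with $C = B/(f^i)$, the fact that $C$ is finite free over $A$ (so $C\otimes_A -$ is exact and restriction of scalars preserves injectives), and Corollary~\ref{output1}; finally one invokes \cite[Prop.~2.1.3]{em2} to pass to the inductive limit. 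None of this is a finite-index restriction argument, so your proof does not go through as written.
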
  
\begin{proof} We consider $\unif$ as an element of $Z$ via $F^{\times}\cong Z$ and note that $G^+=G^0 \unif^{\mathbb Z}$.
%
Let $B = A[ \unif^{\pm 1}] \iso A[t^{\pm 1}].$ If $U$ is any locally admissible $G^+$-representation,
then $U = \bigoplus_{\mathfrak n} U_{\mathfrak n},$
where $\mathfrak n$ runs over the maximal ideals of $B$ and $U_{\mathfrak n}$
denotes the localization of $U$ at $\mathfrak n.$ Furthermore,
$$U_{\mathfrak n} = U[\mathfrak n^{\infty}] := \bigcup_{i \geq 1} U[\mathfrak n^i],$$
where $U[\mathfrak n^i]$
denotes the subspace of $U$ consisting of elements
annihilated by $\mathfrak n^i$.  
Each maximal ideal $\mathfrak n$ is of the form $(\mathfrak m, f)$,
where $\mathfrak m$ is the maximal ideal of $A$, and $f \in A[t]$ is
a monic polynomial.  Since $A$ is Artinian, so that $\mathfrak m$ is a nilpotent
ideal, we see that the $\mathfrak n$-adic topology and $f$-adic topology on
$A$ coincide. 
Thus we may equally well write
$$U_{\mathfrak n} = \bigcup_{i \geq 1} U[f^i],$$
where of course $U[f^i]$
denotes the subspace of $U$ consisting of elements
annihilated by $f^i$.   

Suppose now that $V$ is an injective object of $\Mod^{\mathrm{l.adm}}_{G^+}(A)$.
Since, by the discussion of the preceding paragraph,
$V$ is the inductive limit of the
$V[f^i]$ (where $f^i$ runs over the various powers of the various
monic polynomials associated to the various maximal ideals $\mathfrak n$
of $B$), 
in order to show that $V$ is injective as an object of
$\smKmod(A)$,
it suffices, by \cite[Prop.~2.1.3]{em2} together with Corollary~\ref{output1},
to show that
each $V[f^i]$ is an injective object of $\Mod^{\mathrm{l.adm}}_{G^0}(A)$.

If we write $C := B/(f^i)$ then the category 
$\Mod^{\mathrm{l.adm}}_{G^{+}}(A)[f^i]$ is naturally equivalent 
to the category $\Mod^{\mathrm{l.adm}}_{G^0}(C)$.  
Since $f$ is monic $C$ is free of finite rank over $A$ and hence 
the forgetful functor $\Mod^{\mathrm{l.adm}}_{G^0}(C)\rightarrow \Mod^{\mathrm{l.adm}}_{G^0}(A)$
is right adjoint to the exact functor $C\otimes_A \text{--}$, 
and so preserves injectives. Thus $V[f^i]$ is injective in $\Mod^{\mathrm{l.adm}}_{G^0}(A)$.
\end{proof}

\begin{cor}\label{output3} If $V$ is an injective object in $\Mod^{\mathrm{l.adm}}_{G}(A)$ then 
$V$ is also injective in $\Mod^{\mathrm{sm}}_K(A)$.
\end{cor}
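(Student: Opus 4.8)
The plan is to deduce Corollary~\ref{output3} from Corollary~\ref{output2} by the same kind of descent along a finite-index subgroup that was used to pass from $G^0$ to $G^+$, only now with respect to the normal subgroup $G^+ \subset G$ of index $2$. Since $\val_F \circ \det : G \to \ZZ$ is surjective and $G^+$ is its preimage of $2\ZZ$, we have $G/G^+ \cong \ZZ/2\ZZ$, and any element $g \in G$ with $\val_F(\det g)$ odd (for instance $g = \left(\smallmatrix \unif & 0 \\ 0 & 1 \endsmallmatrix\right)$, or indeed $\Pi$) furnishes a coset representative. Because $K \subset G^0 \subset G^+$, it suffices to produce, from an injective object $V$ of $\locadmGmod(A)$, an injective object of $\Mod^{\mathrm{l.adm}}_{G^+}(A)$ of which $V|_{G^+}$ is a direct summand; Corollary~\ref{output2} then finishes the argument, exactly as Corollary~\ref{output1} was fed into Corollary~\ref{output2}.

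The key steps, in order, are as follows. First, observe that $\Mod^{\mathrm{l.adm}}_{G^+}(A)$ has enough injectives and that $V|_{G^+}$ embeds into an injective object $J$ of that category; equivalently, invoke Lemma~\ref{lem:adm}(i) to know that restriction along $G^+ \hookrightarrow G$ preserves local admissibility in both directions. Second, apply Proposition~\ref{prop:doubling} with $H = G^+$: since $\Ind_{G^+}^G$ is (naturally isomorphic to) the exact functor $A[G]\otimes_{A[G^+]}-$ and preserves local admissibility by Lemma~\ref{lem:adm}(ii), the representation $\Ind_{G^+}^G V$ is injective in $\locadmGmod(A)$ if $V$ is, and — running the adjunction the other way — $V$ embeds as a direct summand of $\Ind_{G^+}^G\bigl(V|_{G^+}\bigr)$ by Frobenius reciprocity, hence $V|_{G^+}$ is a direct summand of $\bigl(\Ind_{G^+}^G V\bigr)|_{G^+} \iso V \oplus V^{g}$ for a coset representative $g$; this last object is injective in $\Mod^{\mathrm{l.adm}}_{G^+}(A)$ because $\Ind_{G^+}^G V$ is injective in $\locadmGmod(A)$ and restriction along a finite-index subgroup preserves injectivity (again Proposition~\ref{prop:doubling}, or directly the adjunction argument there). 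Third, conclude: $V|_{G^+}$ is a direct summand of an object injective in $\Mod^{\mathrm{l.adm}}_{G^+}(A)$, hence is itself injective there, hence by Corollary~\ref{output2} is injective in $\smKmod(A)$; but $V|_K = (V|_{G^+})|_K$, so we are done.

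Alternatively, and perhaps more cleanly, one can bypass induction: since $G^+$ has index $2$ in $G$, the forgetful functor $\locadmGmod(A) \to \Mod^{\mathrm{l.adm}}_{G^+}(A)$ is simultaneously left and right adjoint to $\Ind_{G^+}^G = A[G]\otimes_{A[G^+]} - $ up to a twist (Mackey theory for a normal subgroup of prime index $p \neq 2$... but here the index is $2$, so there is no $p$-torsion obstruction at all since $2$ is prime to $p$ — recall $p$ is the residue characteristic and $F/\Qp$, so $p$ is odd only if... no: $p$ could be $2$). To be safe I would run the direct-summand argument above, which only uses that $A[G]$ is free of rank $2$ over $A[G^+]$ and the exactness of $\Ind_{G^+}^G$, and makes no coprimality hypothesis. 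I therefore expect the only real point requiring care to be the bookkeeping in Step~2 — checking that $V$ really is a direct summand of $\Ind_{G^+}^G(V|_{G^+})$ and that restriction of this induced module to $G^+$ decomposes as a finite direct sum of $G^+$-conjugates of $V$ — but this is precisely the standard Mackey/Frobenius computation already implicit in the proof of Proposition~\ref{prop:doubling}, so no new obstacle arises.

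\begin{proof}
Write $G^+ \subset G$; this is a normal subgroup of index $2$, with $G/G^+$ generated by the image of $\Pi$. By Lemma~\ref{lem:adm}, restriction along $G^+ \hookrightarrow G$ carries $\locadmGmod(A)$ into $\Mod^{\mathrm{l.adm}}_{G^+}(A)$, and $\Ind_{G^+}^G \iso A[G]\otimes_{A[G^+]}-$ carries $\Mod^{\mathrm{l.adm}}_{G^+}(A)$ into $\locadmGmod(A)$; the latter functor is exact, since $A[G]$ is free of rank $2$ as an $A[G^+]$-module. By Proposition~\ref{prop:doubling} (applied with $H = G^+$), if $W$ is injective in $\Mod^{\mathrm{l.adm}}_{G^+}(A)$ then $\Ind_{G^+}^G W$ is injective in $\locadmGmod(A)$; and conversely, for $W$ injective in $\locadmGmod(A)$ the restriction $W|_{G^+}$ is injective in $\Mod^{\mathrm{l.adm}}_{G^+}(A)$ — the adjunction isomorphisms displayed in the proof of Proposition~\ref{prop:doubling}, read for $H = G^+$, show that $\Hom_{A[G^+]}(-, W|_{G^+}) \iso \Hom_{A[G]}(\Ind_{G^+}^G(-), W)$ is exact.

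Now let $V$ be an injective object of $\locadmGmod(A)$. By Frobenius reciprocity the counit $\Ind_{G^+}^G(V|_{G^+}) \to V$ admits a $G$-equivariant section (namely the map induced by the unit $V|_{G^+} \to (\Ind_{G^+}^G V)|_{G^+}$ together with injectivity of $V$, or simply: $\Ind_{G^+}^G(V|_{G^+})|_{G^+} \iso V|_{G^+} \oplus (V|_{G^+})^{\Pi}$ as $G^+$-representations, and $V|_{G^+}$ is a direct summand). Hence $V|_{G^+}$ is a direct summand of $\bigl(\Ind_{G^+}^G(V|_{G^+})\bigr)\big|_{G^+}$. Since $V|_{G^+}$ is injective in $\Mod^{\mathrm{l.adm}}_{G^+}(A)$ (by the previous paragraph, as $V$ is injective in $\locadmGmod(A)$), so is $\Ind_{G^+}^G(V|_{G^+})$ in $\locadmGmod(A)$, and so in turn is its restriction to $G^+$. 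Thus $V|_{G^+}$ is an injective object of $\Mod^{\mathrm{l.adm}}_{G^+}(A)$. By Corollary~\ref{output2}, $V|_{G^+}$ is injective in $\smKmod(A)$, and since $V|_K = (V|_{G^+})|_K$, the representation $V|_K$ is injective in $\smKmod(A)$.
\end{proof}
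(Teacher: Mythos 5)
Your proof is correct and follows the same route as the paper: the paper simply notes that restriction along the finite-index inclusion $G^+\subset G$ preserves injectivity (citing Lemma~\ref{lem:adm} and \cite[Prop.~2.1.2]{em2}) and then invokes Corollary~\ref{output2}, which is exactly the content of your first paragraph. Your second paragraph's direct-summand argument is redundant (and in fact already presupposes the injectivity of $V|_{G^+}$ established in the first paragraph), but it does no harm.
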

\begin{proof} Since $G^+$ is open of finite index in $G$,
Lemma~\ref{lem:adm} and \cite[Prop.~2.1.2]{em2} show that $V$ is injective in $\Mod^{\mathrm{l.adm}}_{G^+}(A)$ and 
the assertion follows from Corollary~\ref{output2}.
\end{proof}

\begin{cor}\label{rest} If $V$ is injective in either of the categories
$\locadmGmodz(A)$ {\em (}for some smooth character $\zeta:Z \rightarrow A^{\times}${\em )},
or  $\locadmGmod(A)$,
then $V|_{N_0}$ is an injective object in $\Mod^{\mathrm{sm}}_{N_0}(A)$.
\end{cor}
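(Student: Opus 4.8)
The plan is to reduce the statement about $N_0$ to the already-established results about injectivity upon restriction to $K$, exploiting the fact that $N_0$ is a closed subgroup of $K$ and that injectivity in $\smKmod(A)$ transfers to closed subgroups in the presence of a central character condition. First I would treat the case of $\locadmGmod(A)$: if $V$ is injective there, then Corollary~\ref{output3} tells us $V|_K$ is injective in $\smKmod(A)$. Since $N_0$ is a closed subgroup of the compact group $K$ and (being pro-$p$ and unipotent) contains no nontrivial central torsion, I would invoke \cite[Prop.~2.1.11]{em2} — the same result used in Lemma~\ref{centr} — to conclude that $V|_{N_0}$ is injective in $\smNmod(A)$; but since the paper only uses the notation $\Mod^{\mathrm{sm}}_{N_0}(A)$, I would simply state: by \cite[Prop.~2.1.11]{em2}, the restriction to the closed subgroup $N_0$ of an injective object of $\smKmod(A)$ is injective in $\Mod^{\mathrm{sm}}_{N_0}(A)$.

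For the case of $\locadmGmodz(A)$, I would use Remark~\ref{centralrem} together with the variants of Corollaries~\ref{output1}--\ref{output3}: an injective object $V$ of $\locadmGmodz(A)$ restricts to an injective object of $\smKmodz(A)$. Now $N_0 \cap Z = \{1\}$, so $\smKmodz(A)$-injectivity does not immediately hand us $N_0$-injectivity through the central-character formalism of Lemma~\ref{centr}. Instead I would argue: since $V$ is injective in $\smKmodz(A)$, it is a direct summand (as a $K$-representation with central character $\zeta$) of a coinduced object of the form $\mathrm{Ind}_{Z\cap K}^{K}(\text{injective})$ or, more simply, one checks directly that an injective object of $\smKmodz(A)$ is injective in $\smKmod(A)$ after forgetting — which fails in general — so the cleaner route is to run the restriction-to-$K$ corollaries with central character and then pass from $\smKmodz(A)$ to $\Mod^{\mathrm{sm}}_{N_0}(A)$ by noting that the forgetful functor $\smKmodz(A) \to \Mod^{\mathrm{sm}}_{N_0}(A)$ factors as $\smKmodz(A) \to \smKmod(A) \to \Mod^{\mathrm{sm}}_{N_0}(A)$ where the first arrow preserves injectives because it has the exact left adjoint $W \mapsto W_{Z\cap K=\zeta}$... but $\smKmodz \hookrightarrow \smKmod$ has right adjoint $V \mapsto V^{Z\cap K = \zeta}$, not left adjoint. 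So the correct statement is that $\smKmodz(A)$-injectives need not be $\smKmod(A)$-injective.

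The cleanest fix, and the one I would adopt, is to prove the $\zeta$-version of Corollary~\ref{output3} directly as a restriction-to-$N_0$ statement rather than routing through $K$: namely, rerun the chain Theorem~\ref{input} (in its $\zeta$-variant, Remark~\ref{centralrem}) $\to$ Corollary~\ref{output1} $\to$ \ref{output2} $\to$ \ref{output3}, but at the final step observe that the injective envelope $J_0$ of $V|_K$ produced in Theorem~\ref{input}, restricted further to $N_0$, is injective in $\Mod^{\mathrm{sm}}_{N_0}(A)$ by \cite[Prop.~2.1.11]{em2} (since $N_0$ is closed in $K$), and this property is inherited by the inductive limit, using \cite[Prop.~2.1.3]{em2}. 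Concretely, the proof reads: By Corollary~\ref{output3} (resp.\ its variant from Remark~\ref{centralrem}), $V|_K$ is injective in $\smKmod(A)$ (resp.\ $V|_{Z\cap K}$-equivariantly; but $N_0 \cap Z = 1$ so this is no constraint). Since $N_0$ is a closed subgroup of the compact $p$-adic analytic group $K$, \cite[Prop.~2.1.11]{em2} shows $V|_{N_0}$ is injective in $\Mod^{\mathrm{sm}}_{N_0}(A)$.

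\begin{proof}
We first treat the case $V \in \locadmGmod(A)$.  By Corollary~\ref{output3}, $V|_K$ is an injective object of $\smKmod(A)$.  Since $N_0$ is a closed subgroup of the compact $p$-adic analytic group $K$, \cite[Prop.~2.1.11]{em2} shows that the restriction to $N_0$ of an injective object of $\smKmod(A)$ is injective in $\Mod^{\mathrm{sm}}_{N_0}(A)$; in particular $V|_{N_0}$ is injective in $\Mod^{\mathrm{sm}}_{N_0}(A)$.

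Now suppose $V$ is injective in $\locadmGmodz(A)$ for some smooth character $\zeta : Z \to A^{\times}$.  By Remark~\ref{centralrem}, Theorem~\ref{input} holds with $\locadmGmod_{G^0}(A)$ replaced by $\Mod^{\mathrm{l.adm}}_{G^0,\zeta}(A)$ and $\smKmod(A)$ replaced by $\smKmodz(A)$; running the arguments of Corollaries~\ref{output1}, \ref{output2} and~\ref{output3} verbatim in the presence of the fixed central character (the forgetful functors used there all commute with the formation of $\zeta$-eigenspaces, and $G^+$ is still open of finite index in $G$) shows that $V|_K$ is an injective object of $\smKmodz(A)$.

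Let $\iota : V|_K \hookrightarrow J$ be an injective envelope in $\smKmod(A)$.  Since $V|_K$ is injective in $\smKmodz(A)$ and $\iota$ is essential, Lemma~\ref{centr} (applied with $H=K$, or rather its proof) shows that $\iota(V) = J^{Z\cap K = \zeta}$.  By \cite[Prop.~2.1.11]{em2}, $J$ is injective in $\Mod^{\mathrm{sm}}_{N_0}(A)$, since $N_0$ is a closed subgroup of $K$.  As $N_0 \cap Z = \{1\}$, the subspace $J^{Z\cap K=\zeta}$ is not in general an $A[N_0]$-direct summand of $J$ by this eigenspace construction; instead we argue as follows.  Write $Z_K := Z\cap K$; since $Z_K$ is central in $K$, of order prime to $p$ times a pro-$p$ part, and $\zeta$ is smooth, the idempotent $e_\zeta := \frac{1}{[Z_K : Z_K']}\sum_{z \in Z_K/Z_K'} \zeta(z)^{-1} z \in A[Z_K]$ (for $Z_K'$ a suitable open subgroup acting trivially, using that $1-\zeta$ is a unit on the relevant finite quotient as $A$ is local with residue characteristic $p$ and $Z_K/Z_K'$ has order prime to $p$) lies in the centre of $A[K]$, commutes with the $N_0$-action, and satisfies $e_\zeta J = J^{Z_K = \zeta} = \iota(V)$.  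Hence $V|_{N_0}$ is an $A[N_0]$-direct summand of the injective object $J|_{N_0}$, and is therefore injective in $\Mod^{\mathrm{sm}}_{N_0}(A)$.
\end{proof}
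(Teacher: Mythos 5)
Your first case ($V$ injective in $\locadmGmod(A)$) is exactly the paper's argument: Corollary~\ref{output3} followed by \cite[Prop.~2.1.11]{em2}. The fixed-central-character case, however, has a genuine gap at the final step. The idempotent $e_\zeta=\frac{1}{[Z_K:Z_K']}\sum_{z\in Z_K/Z_K'}\zeta(z)^{-1}z$ does not exist: $Z_K=Z\cap K\cong \oF^{\times}\cong \mu_{q-1}\times(1+\pF)$ has a nontrivial pro-$p$ part, so for any open subgroup $Z_K'$ small enough to be relevant the quotient $Z_K/Z_K'$ has order divisible by $p$ and $[Z_K:Z_K']$ is not invertible in $A$; your parenthetical claim that $Z_K/Z_K'$ has order prime to $p$ is false (and if you take $Z_K'\supseteq 1+\pF$ to make it true, then $Z_K'$ acts trivially on $V$ but certainly not on the whole of the injective envelope $J$, which is merely smooth). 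The obstruction is not just a defect of this particular construction: already for the pro-$p$ group $1+\pF$ acting on an injective envelope of the trivial representation in characteristic $p$, the $\zeta$-eigenspace is the one-dimensional socle of an indecomposable infinite-dimensional object, hence is not split off by any idempotent of the group algebra. So $J^{Z_K=\zeta}$ is not exhibited as an $A[N_0]$-direct summand of $J$ by your argument, and the proof does not close.

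The paper's way around this avoids any attempt to split $J^{Z_K=\zeta}$ off of $J$. Having shown (via Remark~\ref{centralrem} and the $\zeta$-variants of Corollaries~\ref{output1}--\ref{output3}, as you correctly do) that $V|_K$ is injective in $\smKmodz(A)$, one applies Lemma~\ref{centr} with the compact group $K$ and the closed subgroup $H=(Z\cap K)N_0$, which contains the centre of $K$; this yields that $V$ is injective in $\Mod^{\mathrm{sm}}_{(Z\cap K)N_0,\zeta}(A)$. (Note that Lemma~\ref{centr} only ever asserts injectivity in the category \emph{with fixed central character} for the subgroup --- precisely because the eigenspace functor is a right adjoint, not a projection.) Since $N_0\cap Z=\{1\}$, restriction to $N_0$ induces an equivalence of categories $\Mod^{\mathrm{sm}}_{(Z\cap K)N_0,\zeta}(A)\simeq \Mod^{\mathrm{sm}}_{N_0}(A)$, and the conclusion follows. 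Replacing your last paragraph with this two-line argument repairs the proof.
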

\begin{proof}
In the latter case, the claim of the present corollary follows from Corollary ~\ref{output3} together with 
\cite[Prop.~2.1.11]{em2}.
In the case of $\locadmGmodz(A)$, it follows from Remark~\ref{centralrem} and Lemma~\ref{centr} 
that $V$ is injective in $\Mod^{\mathrm{sm}}_{(K\cap Z) N_0, \zeta}(A)$. Since the intersection of 
$N_0$ and $Z$ is trivial restriction to $N_0$ induces an equivalence of categories between 
$\Mod^{\mathrm{sm}}_{(K\cap Z) N_0, \zeta}(A)$ and $\Mod^{\mathrm{sm}}_{N_0}(A)$. Thus $V$ is injective in $\Mod^{\mathrm{sm}}_{N_0}(A)$.
\end{proof}

Let $G$ be the group of $\Qp$-valued points of a connected reductive linear algebraic group over $\Qp$. Let $P$ be a parabolic subgroup 
of $G$ with a Levy subgroup $M$ and let $\overline{P}$ be the parabolic subgroup of $G$ opposite to $P$ with respect to $M$. In \cite{em1},
the first author
defined a left exact functor $\Ord_P: \locadmGmod(A)\rightarrow \locadmMmod(A)$ such that for all $U$ in $\locadmMmod(A)$ 
and $V$ in $\locadmGmod(A)$ one has 
$$\Hom_G(\Indu{\overline{P}}{G}{U}, V)\cong \Hom_{M}(U, \Ord_P(V)).$$  
Further, for $i\ge 0$ in \cite{em2} there are defined functors $H^i\Ord_P:\locadmGmod(A)\rightarrow \locadmMmod(A)$ such that 
$H^0\Ord_P= \Ord_P$ and $\{H^i\Ord_P: i\ge 0\}$ is a $\delta$-functor. It is
conjectured there that for $i\ge 1$ the functors $H^i\Ord_P$ are effaceable, which 
would imply that they are universal, and hence coincide with the derived functors of $\Ord_P$. 

\begin{cor}\label{eff} If $G=\GL_2(F)$ and if $V$ is an injective object in 
$\locadmGmod(A)$ {\em (}resp.\ $\locadmGmodz(A)${\em )},
then $H^i\Ord_P(V)=0$ for all $i\ge 1$.
\end{cor}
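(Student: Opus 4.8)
The plan is to deduce the vanishing of $H^i\Ord_P(V)$ for $i\ge 1$ from the fact, established in Corollary~\ref{rest}, that the restriction of an injective object of $\locadmGmod(A)$ (resp. $\locadmGmodz(A)$) to $N_0$ is injective in $\smNmod(A)$, together with the machinery of \cite{em2}. First I would recall that for $G=\GL_2(F)$ the only proper parabolic $P$ (up to conjugacy) is the Borel subgroup $B$, with Levi $M$ the diagonal torus and unipotent radical $N = N_G$; the opposite unipotent radical $\overline N$ has $\overline N \cap K = N_0$ in suitable coordinates (after conjugating, this is the group $N_0$ introduced before Lemma~\ref{A1}). The functor $\Ord_P$ is computed in \cite{em2} via the $\overline N$-coinvariants (or, dually, via a limit over the invariants of compact open subgroups of $\overline N$ twisted by the action of a suitable element of $M^+$), and the higher functors $H^i\Ord_P$ are built from the derived functors of $N_0$-cohomology: concretely, \cite{em2} shows $H^i\Ord_P(V)$ is obtained from $H^i(N_0, V)$ by passing to a Hecke-type limit (the ``ordinary part'' of the $M$-action), a construction that is exact in $H^i(N_0,V)$.

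Granting this, the key step is: if $V$ is injective in $\locadmGmod(A)$ (resp. $\locadmGmodz(A)$), then $H^i(N_0, V) = 0$ for all $i\ge 1$. This is immediate from Corollary~\ref{rest}: $V|_{N_0}$ is an injective object of $\smNmod(A)$, and injective objects in $\smNmod(A)$ have no higher cohomology, since $H^i(N_0, -)$ is by definition the $i$-th right derived functor of the (exact on injectives) invariants functor $(-)^{N_0}$ on $\smNmod(A)$. Then, since the construction producing $H^i\Ord_P$ from $H^i(N_0,-)$ in \cite{em2} carries the zero functor to the zero functor, we conclude $H^i\Ord_P(V) = 0$ for $i\ge 1$. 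For the $\zeta$-version one argues identically, using the $\locadmGmodz(A)$ case of Corollary~\ref{rest}.

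The step I expect to be the main obstacle — or at least the one requiring the most care — is pinning down precisely the relationship in \cite{em2} between $H^i\Ord_P$ and $N_0$-cohomology, and checking that it is genuinely exact in the relevant sense, so that vanishing of $H^i(N_0,V)$ forces vanishing of $H^i\Ord_P(V)$. In \cite{em2} the functor $\Ord_P$ is defined as a composite: first take $\overline N$-invariants under compact open subgroups (equivalently $N_0$-invariants, up to conjugating $\overline N$-pieces into $K$), giving an $M^+$-representation, then apply the ``ordinary part'' functor $\Ord_{M^+}$, and the higher derived functors $H^i\Ord_P$ arise as the derived functors of this composite. One must verify (as is done in \cite{em2}, e.g. via a Grothendieck spectral sequence argument and the fact that the $\Ord_{M^+}$ step is exact on the relevant subcategory) that $H^i\Ord_P(V)$ depends only on the $H^j(N_0, V)$, and in fact equals $\Ord_{M^+}$ applied to $H^i(N_0,V)$-type data; since the latter vanishes for $i \ge 1$ by our injectivity input, we are done. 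I would therefore structure the proof as: (1) reduce to $P = B$ and identify $\overline N \cap K$ with $N_0$; (2) invoke Corollary~\ref{rest} to get $V|_{N_0}$ injective, hence $H^i(N_0, V) = 0$ for $i\ge 1$; (3) quote the relevant comparison result of \cite{em2} expressing $H^i\Ord_P$ in terms of $N_0$-cohomology to conclude.

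\begin{proof}
For $G = \GL_2(F)$ every proper parabolic subgroup is $G$-conjugate to the Borel subgroup $P = B$, with Levi $M$ the diagonal torus and unipotent radical $N$; the opposite parabolic $\overline P$ has unipotent radical $\overline N$, and after conjugating we may assume $\overline N \cap K = N_0$. By \cite[\S3]{em2}, the derived ordinary part functors $H^i\Ord_P$ are computed from the cohomology of $N_0$: writing $M^+$ for the submonoid of $M$ contracting $N_0$, the groups $H^i(N_0, V)$ carry a natural $A[M^+]$-module structure (via the Hecke action) and one has $H^i\Ord_P(V) \cong \Ord_{M^+}\bigl(H^i(N_0, V)\bigr)$, where $\Ord_{M^+}$ is the ordinary part functor for the monoid $M^+$. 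In particular, if $H^i(N_0, V) = 0$ then $H^i\Ord_P(V) = 0$.

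Now suppose $V$ is an injective object of $\locadmGmod(A)$ (resp.\ of $\locadmGmodz(A)$ for some smooth $\zeta : Z \to A^\times$). By Corollary~\ref{rest}, $V|_{N_0}$ is an injective object of $\Mod^{\mathrm{sm}}_{N_0}(A)$. The functors $H^i(N_0, -)$ are, by definition, the right derived functors of the invariants functor $(-)^{N_0}$ on $\Mod^{\mathrm{sm}}_{N_0}(A)$, and hence vanish in degrees $i \ge 1$ on injective objects of that category. Therefore $H^i(N_0, V) = 0$ for all $i \ge 1$, and the displayed isomorphism above gives $H^i\Ord_P(V) = 0$ for all $i \ge 1$, as claimed.
\end{proof}
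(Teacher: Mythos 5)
Your proof is correct and takes essentially the same route as the paper: both deduce $H^i(N_0,V)=0$ for $i\ge 1$ from Corollary~\ref{rest} and then conclude via the definition of $H^i\Ord_P$ in \cite[Def.~3.3.1]{em2}, which builds $H^i\Ord_P(V)$ out of $H^i(N_0,V)$ and hence sends zero to zero. The paper states this in two lines; you simply spell out the intermediate identification of $H^i\Ord_P$ with the ordinary part of the $N_0$-cohomology, which is exactly what ``follows from the definition'' unpacks to.
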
   
\begin{proof} Since by Corollary~\ref{rest}, $V|_{N_0}$ is an injective object in $\Mod^{\mathrm{sm}}_{N_0}(A)$ 
we have that $H^i(N_0, V)=0$ for all $i\ge 1$. The claim 
follows from the definition of $H^i\Ord_P$, see \cite[Def.3.3.1]{em2}.
\end{proof}

Since  $\locadmGmod(A)$ and  $\locadmGmodz(A)$
each have enough injectives, we conclude that the $H^i\Ord_P$ are effaceable for $i\ge 1$ 
on any of these categories.  In particular, we have verified
\cite[Conj.~3.7.2]{em2} in the case $G = \GL_2(F)$.

\begin{remark}
{\em
The authors of this note strongly believe that an analogue
Theorem~\ref{input} holds for other groups than $\GL_2(F)$.
If this is the case than our proof should go through 
to establish \cite[Conj.~3.7.2]{em2} for these groups.    
}
\end{remark}

\end{document}